\documentclass[a4paper,12pt]{article}

\usepackage{a4wide}
\usepackage{amsmath, amssymb, amsthm}
\usepackage{amsfonts}
\usepackage{latexsym}
\usepackage{eucal}
\usepackage{xcolor}
\usepackage{enumerate}
\usepackage{graphicx}

% \lineskip = 1.2\baselineskip
% \textwidth=16cm \negatsidemargin=0in \evensidemargin=-0.5in
% \topmargin=-0.8in \textheight=10in

% \usepackage[textwidth=16cm]{geometry}

\author{
  Tom\'a\v s Kaiser, Robert Lukot\!'ka, Edita M\' a\v cajov\' a, Edita Rollov\' a
}
\title{\textbf{Shorter signed circuit covers of graphs}}
\date{}

\newtheorem{theorem}{Theorem}[section]

\newtheorem{lemma}[theorem]{Lemma}
\newtheorem{corollary}[theorem]{Corollary}

\newtheorem{conjecture}[theorem]{Conjecture}

\newtheorem{observation}[theorem]{Observation}
\newtheorem{claim}{Claim}

\newcommand{\claimproofend}{\hspace*{.1mm}\hspace{\fill}}

\newcommand{\negat}[1]{\eps(#1)} \newcommand{\negatx}[1]{\eps'(#1)}
\newenvironment{xcase}[1]{\par\medskip\noindent\textbf{\emph{#1:}}\itshape}{\normalfont\par\smallskip}

\newcommand{\Setx}[1]{\left\{#1\right\}}
\newcommand{\size}[1]{|#1|}

\newcommand{\CC}{\mathcal C}
\newcommand{\KK}{\mathcal K}

\setlength\arraycolsep{2pt}

\let\epsilon=\varepsilon
\let\eps=\varepsilon
\let\phi = \varphi

%%%%%%%%%%%%%%%%%%%%%%%%%%%%%%%%%%%%%%%%%%%%%%%%%%%%%%%%%%%%%%%%%%%%%%

\begin{document}

\maketitle

\abstract{A signed circuit is a minimal signed graph (with respect to
  inclusion) that admits a nowhere-zero flow.  We show that each
  flow-admissible signed graph on $m$ edges can be covered by signed
  circuits of total length at most $(3+2/3)\cdot m$, improving a
  recent result of Cheng et al. [manuscript, 2015].  To obtain this
  improvement we prove several results on signed circuit covers of
  trees of Eulerian graphs, which are connected signed graphs such
  that removing all bridges results in a collection of Eulerian
  graphs.}

% ========================= INTRO ===================================

\section{Introduction}
A \emph{circuit cover} of a bridgeless graph $G$ is a collection of circuits such that each
edge of $G$ belongs to at least one of them. One of the most studied problems concerning circuit covers is finding a circuit cover of the graph with small total length. A conjecture by Alon and Tarsi
\cite{AT} bounds the length of the shortest circuit cover of a bridgeless graph from above.

\begin{conjecture}[Short Cycle Cover conjecture]
  Every bridgeless graph $G$ has a circuit cover
  of total length at most $7/5 \cdot |E(G)|$.
\end{conjecture}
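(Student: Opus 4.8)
This is a famous open problem (posed by Alon and Tarsi in $1985$ and still unresolved), so what follows is the standard line of attack together with an honest account of where it breaks down. The plan is: reduce to cyclically $4$-edge-connected cubic graphs; dispatch the ones carrying a nowhere-zero $4$-flow by a perfect-matching decomposition; and then grapple with the snarks. For the reduction, take a counterexample $G$ with $|E(G)|$ minimum (and, without loss of generality, connected). A vertex of degree at least $4$ is eliminated by Fleischner's classical splitting lemma: a suitable pair of edges at it can be split off keeping the graph bridgeless and dropping one edge, and a shortest circuit cover of the smaller graph lifts back after merging any circuits that share the new edge, so the ratio does not increase. An edge-cut of size $2$ or $3$ is handled by cutting along it, covering the two smaller sides recursively, and gluing the covers after matching up the pairs of cut-edges that the circuits use. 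Hence it suffices to prove the bound for cyclically $4$-edge-connected cubic graphs.

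If such a $G$ has a nowhere-zero $4$-flow -- equivalently, for cubic graphs, a proper $3$-edge-colouring $E(G)=M_1\cup M_2\cup M_3$ into perfect matchings -- then each $M_i\cup M_j$ is a $2$-regular spanning subgraph, i.e.\ a disjoint union of circuits, and these three collections of circuits cover every edge exactly twice, for a total length of $2\,|E(G)|$. Discarding the longest of the three leaves a circuit cover of total length at most $\tfrac23\cdot 2\,|E(G)|=\tfrac43\,|E(G)|<\tfrac75\,|E(G)|$, contradicting the choice of $G$.

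The remaining graphs are the snarks: cyclically $4$-edge-connected cubic graphs of chromatic index $4$. The smallest is the Petersen graph, which has $15$ edges and a shortest circuit cover of length $21=\tfrac75\cdot 15$, so the conjectured bound is tight and no slack is available. The natural construction fixes a perfect matching $M$; then $G-M$ is a $2$-factor, covering its circuits costs exactly $\tfrac23\,|E(G)|$, and one is left with a budget of only $\tfrac{11}{15}\,|E(G)|$ to cover the $\tfrac13\,|E(G)|$ edges of $M$ by further circuits. One would cover $M$ by circuits re-using long arcs of the $2$-factor, choosing $M$ (equivalently, the $2$-factor) so that its circuits are few and not too short, and steering the construction by a nowhere-zero $\mathbb{Z}_2\times\mathbb{Z}_3$-flow (Seymour's $6$-flow theorem) or by a rich family of even subgraphs that covers all edges but a small fraction, the rest being patched directly.

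The main obstacle is precisely this last step. No decomposition currently available forces the cost of covering $M$ down to the narrow margin that $\tfrac75$ permits on an arbitrary snark; even the structural subproblem of controlling the $2$-factors of snarks (finding one with few short circuits) is difficult, and the conjecture is known to imply the Cycle Double Cover conjecture (Jamshy and Tarsi), so no short argument should be expected. The best unconditional bound obtainable along these lines remains $\tfrac53\,|E(G)|$ (Bermond, Jackson and Jaeger; Alon and Tarsi; reproved by Fan); realistically this programme yields only improved constants under extra girth or connectivity hypotheses or for special families of snarks, and the full statement appears to need a genuinely new device beyond even-subgraph arithmetic.
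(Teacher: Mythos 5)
The statement you were asked about is the Short Cycle Cover conjecture of Alon and Tarsi. It is stated in the paper only as a conjecture motivating the work; the paper contains no proof of it (and none exists in the literature --- the best known general bound is $5/3\cdot|E(G)|$, Theorem~\ref{BJJ}). So there is no paper proof to compare your argument against, and your submission is, by your own explicit admission, not a proof: the entire content of the conjecture is concentrated in the case you leave open, namely cyclically $4$-edge-connected cubic graphs of chromatic index $4$, where the Petersen graph shows the bound $7/5$ is tight and no known even-subgraph or flow-based construction closes the gap. Acknowledging this is the honest thing to do, but it means the statement remains unproved.

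Beyond the admitted gap, be aware that even your reduction steps are not as routine as stated. When you split off a pair of edges at a vertex of degree at least $4$ and lift a cover of the smaller graph back, the new edge may be used by several circuits of the cover, so the lifted cover can grow by more than one edge and the ratio $7/5$ is not automatically preserved; similarly, gluing covers across $2$- and $3$-edge-cuts requires a careful matching of how circuits cross the cut (parity constraints on the cut edges), and the naive recursion does not control the total length. These reductions can be carried out, but they need the same kind of quantitative bookkeeping (width and length accounting per edge) that the present paper performs in Lemmas~\ref{l:43}, \ref{l:eventree} and \ref{l:main} for the signed setting, and in any case they only transfer the problem to snarks rather than solve it.
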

\noindent The best general upper bound on the length of a shortest cycle cover is a result obtained independently by Bermond, Jackson, and Jaeger~\cite{BJJ} and by Alon and Tarsi~\cite{AT}.

\begin{theorem}\emph{\cite{AT,BJJ}}\label{BJJ}
  If $G$ is a bridgeless graph, then it admits a circuit cover of total length at most $5/3\cdot |E(G)|$.
\end{theorem}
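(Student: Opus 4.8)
\medskip
\noindent\emph{Proof strategy.}
The plan is to reduce everything to covering $E(G)$ by even subgraphs of small total size: an even subgraph is an edge-disjoint union of circuits, so a family of even subgraphs that covers $E(G)$ and has total size $\ell$ gives a circuit cover of length $\ell$. Thus it suffices to cover $E(G)$ by even subgraphs of total size at most $\tfrac53\,|E(G)|$. The starting point is Jaeger's $8$-flow theorem: a bridgeless graph has a nowhere-zero $\mathbb{Z}_2^3$-flow, or equivalently $E(G)=F_1\cup F_2\cup F_3$ for three even subgraphs $F_i$ (the supports of the three coordinates of the flow). Set $F_4=F_1\triangle F_2\triangle F_3$, which is again even. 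For an edge $e$ with flow value $v\ne 0$, the number of the four subgraphs $F_1,\dots,F_4$ containing $e$ is $2$ if $v$ has Hamming weight $1$ or $2$ and is $4$ if $v=(1,1,1)$; in particular every edge lies in at least two of them, so any three of $F_1,\dots,F_4$ still cover $E(G)$. Since $|F_1|+|F_2|+|F_3|+|F_4|=2|E(G)|+2t$, where $t$ is the number of edges on which the flow takes the value $(1,1,1)$, discarding the largest of the four even subgraphs and decomposing the rest into circuits yields a circuit cover of length at most $\tfrac34\bigl(2|E(G)|+2t\bigr)=\tfrac32|E(G)|+\tfrac32 t$. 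Finally, $t$ depends only on the linear coordinate frame — with respect to a basis $b_1,b_2,b_3$ it equals the number of edges on which the flow equals $b_1+b_2+b_3$ — and averaging over the frames shows one may arrange $t\le\tfrac17|E(G)|$, which already gives a circuit cover of length at most $\tfrac{12}{7}|E(G)|$.

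To push the constant down from $\tfrac{12}{7}$ to $\tfrac53$ something genuinely sharper is needed, since a ``uniform'' nowhere-zero $\mathbb{Z}_2^3$-flow (equally many edges of each of the seven nonzero values) makes every variant of the argument above tight at $\tfrac{12}{7}$. I would therefore split into two cases according to whether $G$ has a nowhere-zero $4$-flow. If it does, then $E(G)=C_1\cup C_2$ for two even subgraphs; the three even subgraphs $C_1,C_2,C_1\triangle C_2$ then have the property that every edge lies in exactly two of them, so their total size is $2|E(G)|$ and the two smallest of them cover $E(G)$, giving a circuit cover of length at most $\tfrac43|E(G)|<\tfrac53|E(G)|$.

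The remaining case, where $G$ is bridgeless but has no nowhere-zero $4$-flow, is the heart of the matter. Here I would invoke Seymour's $6$-flow theorem to obtain a nowhere-zero $\mathbb{Z}_2\times\mathbb{Z}_3$-flow; its two coordinates split $E(G)$ into an even subgraph $D$ (the support of the $\mathbb{Z}_2$-coordinate) and a subgraph $S$ (the support of the $\mathbb{Z}_3$-coordinate) with $D\cup S=E(G)$. The restriction of the flow to $S$ is a nowhere-zero $\mathbb{Z}_3$-flow of $S$, hence $S$ has a nowhere-zero $4$-flow, hence $E(S)$ is a union of two even subgraphs and $S$ has a circuit cover of length at most $\tfrac43|S|$; together with a circuit decomposition of $D$ of length $|D|$ this covers $E(G)$ with total length at most $|D|+\tfrac43|S|$. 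The obstacle I expect to be hardest is that $|D|+\tfrac43|S|$ is \emph{not} automatically at most $\tfrac53|E(G)|$: if the two coordinates of the $6$-flow are simultaneously nonzero on many edges then $D$ and $S$ overlap heavily and this bound fails. Getting past this requires choosing the $6$-flow — or adjusting the even subgraphs and re-routing circuits through the overlap — so that $|D\cap S|$ is controlled, and this is exactly the delicate step of the Bermond--Jackson--Jaeger / Alon--Tarsi argument.
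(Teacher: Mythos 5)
This theorem is only cited in the paper (from Alon--Tarsi and Bermond--Jackson--Jaeger), so there is no in-paper proof to compare against; judged on its own, your proposal has a genuine gap. The parts you carry out are correct: the $\mathbb{Z}_2^3$-flow argument with basis-averaging does give a cover of length at most $\tfrac{12}{7}\,|E(G)|$, and when $G$ has a nowhere-zero $4$-flow, keeping the two cheapest of $C_1$, $C_2$, $C_1\triangle C_2$ does give $\tfrac{4}{3}\,|E(G)|$. But the entire content of the $5/3$ bound lies in the remaining case, and there you stop exactly where the work begins. Covering $D$ (the support of the $\mathbb{Z}_2$-coordinate) by a circuit decomposition and $S$ (the support of the $\mathbb{Z}_3$-coordinate) by a separate $\tfrac{4}{3}$-cover yields $|D|+\tfrac{4}{3}|S|$, and since $|D|+|S|=|E(G)|+|D\cap S|$ this exceeds $\tfrac{5}{3}\,|E(G)|$ whenever the overlap is substantial; in the extreme case where both coordinates are nonzero on every edge it gives $\tfrac{7}{3}\,|E(G)|$ (such a $G$ is an even graph and trivially coverable, but this shows the two-separate-covers strategy cannot be salvaged as stated).

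The remedy you gesture at --- ``choosing the $6$-flow so that $|D\cap S|$ is controlled'' --- is not something Seymour's theorem provides, and it is not what the cited proofs do. The classical argument does not cover $D$ and $S$ independently: roughly, it extracts from the $\mathbb{Z}_3$-coordinate two even subgraphs whose union is $S$, forms candidate covers by taking symmetric differences of these with the $\mathbb{Z}_2$-support so that the even part absorbs the overlap, and then a counting/averaging argument selects a family of even subgraphs of total size at most $\tfrac{5}{3}\,|E(G)|$, which is finally decomposed into circuits. That interlacing of the two coordinates is the heart of the theorem and is absent from your write-up. So what you have is a correct proof of the weaker $\tfrac{12}{7}$ bound (and of $\tfrac{4}{3}$ under a $4$-flow hypothesis), together with an accurate description of the difficulty in the main case --- but not a proof of the statement.
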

\noindent The Short Cycle Cover Conjecture has connections to many well-known conjectures:
for instance, it is implied by the Petersen Flow Conjecture (alternatively known as the Petersen Colouring Conjecture) of Jaeger \cite[Section~7]{jaeger2},
while it implies the Cycle Double Cover Conjecture \cite{jamshy}.

\noindent In parallel to the classical graph theory there is a
fast-growing theory of \emph{signed graphs}, graphs where each edge
has a positive or a negative sign.  More formally, a signed graph
$(G,\sigma)$ is a graph $G$ endowed with a function $\sigma$ called
\emph{signature} which assigns values either $1$ (\emph{positive
  edges}) or $-1$ (\emph{negative edges}) to the edges. A circuit $C$
in $G$ is \emph{balanced} if the number of negative edges in $C$ is
even, and \emph{unbalanced} otherwise. Two signed graphs
$(G,\sigma_1)$ and $(G,\sigma_2)$ are \emph{equivalent} if they have
the same set of balanced circuits. Equivalent signed graphs are
considered to be the same, and one can freely replace the signature of
a signed graph with the signature of any equivalent signed graph. We
say that a signature $\sigma$ is a \emph{minimum signature} of
$(G,\sigma)$ if there is no signed graph equivalent to $(G,\sigma)$
with fewer negative edges.

Many fundamental concepts in graphs, such as colourings, flows,
homomorphisms, or surface embeddings found their analogues in signed
graphs~\cite{zaslav, bouchet, RRS, SS}.  In this paper we will study a
problem of short circuit covers for signed graphs introduced by
M\'{a}\v{c}ajov\'{a} et al. in \cite{MRRS}. We refer the reader
to~\cite{KLR} for a survey on circuit covers and nowhere-zero flows in
signed graphs, and to~\cite{Zs} for an extensive bibliography on
signed graphs.

According to~\cite{MRRS}, a \emph{signed circuit} of a signed graph is
one of the following subgraphs: (1) a balanced circuit, (2) the union
of two unbalanced circuits which meet at a single vertex --- a
\emph{short barbell}, or (3) the union of two disjoint unbalanced
circuits with a path which meets the circuits only at it ends---a
\emph{long barbell}. A \emph{barbell} is either a short or a long
barbell.

The definition of signed circuit is chosen {{so}} that several important
correspondences are preserved. A signed circuit in a signed graph $G$
forms a signed circuit in the signed graphic matroid $\mathcal{M}(G)$
and vice versa \cite[Theorem~5.1]{zaslav1}. Moreover, a signed graph
is \textit{flow-admissible}, that is it admits a signed nowhere-zero
flow, if and only if every edge of the signed graph belongs to a
signed circuit \cite[Proposition~3.1]{bouchet}. Furthermore,
signed circuits are the minimal graphs that are flow-admissible, a
property that circuits satisfy in ordinary graphs.

Let $\CC$ be a collection of subgraphs of a signed graph $G$. We say
that $\CC$ is a \emph{cover} if each edge of $G$ is \emph{covered by
  $\CC$} (i.e. it is contained in at least one subgraph from $\CC$),
and it is a \emph{signed circuit cover} if each element of $\CC$ is a
signed circuit. The \emph{length} of $\CC$ is the sum
$\sum_{F\in\CC} \size{E(F)}$. The \emph{width} $\omega_\CC(e)$ of an
edge $e$ with respect to $\CC$ is defined as the number of subgraphs
from $\CC$ containing $e$. If $\omega_{\CC}(e)=k$, then we also say
that $e$ is \emph{covered $k$ times} by $\CC$. The \emph{width} of the
collection $\CC$, denoted by $\omega(\CC)$, is the maximum width of an
edge with respect to $\CC$ over all edges of $G$.

If $\CC_1,\dots,\CC_\ell$ are collections of subgraphs of $G$, then
the \emph{total length} of $\Setx{\CC_1,\dots,\CC_\ell}$ is the sum of
lengths of all the $\CC_i$ ($i\in\{1,\dots,\ell\}$). The \emph{total width}
of an edge $e$ with respect to the collection
$\Setx{\CC_1,\dots,\CC_\ell}$ is defined as
\begin{equation*}
  \omega_{\CC_1,\dots,\CC_\ell}(e) = \sum_{i=1}^\ell \omega_{\CC_i}(e).
\end{equation*}
Again, the \emph{total width} of the collection $\Setx{\CC_1,\dots,\CC_\ell}$,
denoted by $\omega(\CC_1,\dots,\CC_\ell)$, is the maximum width of an edge
with respect to this collection over all edges of $G$.

M\'a\v cajov\'a et al.~\cite{MRRS} proved that every flow-admissible
signed graph with $m$ edges has a signed circuit cover of length at
most $11\cdot m$.  Cheng, Lu, Luo and Zhang~\cite{CLLZ} announced in
2015 an improvement of the bound to
$14/3\cdot m - 5/3 \cdot \epsilon_N -4$ for flow-admissible signed
graphs with $\epsilon_N$ negative edges in a minimum signature. A
refinement of their proof idea leads to the following improvement.

\begin{theorem}\label{thm:main}
  Let $(G,\sigma)$ be a flow-admissible signed graph with $m$ edges and let $\epsilon_N$ be the number of negative edges in a minimum signature of $(G,\sigma)$.
  Then there is a signed circuit cover of $(G,\sigma)$ of length at most $11/3\cdot m-5/3 \cdot \epsilon_N$.
\end{theorem}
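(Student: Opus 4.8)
The plan is to reduce the statement to two situations we can already handle: the classical short cycle cover bound of Theorem~\ref{BJJ} for ordinary bridgeless graphs, and the auxiliary bounds on signed circuit covers of trees of Eulerian graphs. First I would dispose of the easy reductions. Since a signed circuit cover of $(G,\sigma)$ is the disjoint union of signed circuit covers of its components, and both $m$ and $\epsilon_N$ are additive over components, I may assume $G$ is connected. If $(G,\sigma)$ is balanced then $\epsilon_N=0$ and $(G,\sigma)$ is equivalent to the all-positive signed graph on $G$; flow-admissibility forces $G$ to be bridgeless, signed circuits are ordinary circuits, and Theorem~\ref{BJJ} yields a cover of length at most $\tfrac53 m\le\tfrac{11}3 m$. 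So from now on $(G,\sigma)$ is unbalanced and $\sigma$ is a minimum signature with exactly $\epsilon_N\ge1$ negative edges; in particular, at every vertex at most half of the incident edges are negative, since otherwise switching at that vertex would reduce the number of negative edges.

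The heart of the argument is a structural decomposition of $E(G)$ into two parts. Using flow-admissibility I would extract a spanning subgraph $H$ that contains all the negative edges of $\sigma$ and all the bridges of $G$, is a disjoint union of trees of Eulerian graphs, and is chosen so that $R:=G-E(H)$ is a bridgeless, entirely positive graph; the existence of such an $H$ is where flow-admissibility and the half-degree property of the minimum signature are used. Given this, I cover $R$ with Theorem~\ref{BJJ}, obtaining a signed circuit cover of $R$ of length at most $\tfrac53\,\size{E(R)}$; since $R$ avoids all $\epsilon_N$ negative edges, $\size{E(R)}\le m-\epsilon_N$, so this contributes at most $\tfrac53(m-\epsilon_N)$. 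The subgraph $H$ I cover using the results on signed circuit covers of trees of Eulerian graphs: its Eulerian blocks decompose into circuits at cost equal to their size, and its bridges, which no balanced circuit can meet, are threaded through the connecting paths of barbells whose unbalanced ends are reused across many bridges; these lemmas are meant to yield a signed circuit cover of $H$ of length at most $2\,\size{E(H)}\le 2m$. Adding the two contributions and using $\size{E(H)}+\size{E(R)}=m$ gives a signed circuit cover of $(G,\sigma)$ of length at most $2\,\size{E(H)}+\tfrac53(m-\epsilon_N)\le 2m+\tfrac53 m-\tfrac53\epsilon_N=\tfrac{11}3 m-\tfrac53\epsilon_N$.

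The main obstacle is the middle step, namely building $H$ with all of the required properties simultaneously --- swallowing every negative edge and every bridge, remaining a disjoint union of trees of Eulerian graphs, and leaving a bridgeless positive complement --- and then covering $H$ within the claimed length. Controlling the bridges inside $H$ is the delicate part: a long barbell covering a bridge must also carry two unbalanced circuits, so a bound like $2\,\size{E(H)}$ can only be met if the barbell skeleton is built economically, sharing unbalanced ends (supplied by the unbalanced leaf blocks, which must be unbalanced by flow-admissibility of $H$) and keeping each Eulerian edge covered at most twice. Making this quantitative, in a way that degrades gracefully when $H$ has many bridges or when some Eulerian blocks are small, is precisely what the preparatory results on trees of Eulerian graphs are for, and fitting their constants together with the $\tfrac53$ from Theorem~\ref{BJJ} is what should produce the improvement from $14/3$ to $11/3$.
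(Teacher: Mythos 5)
Your global outline matches the paper in spirit --- cover a ``positive, bridgeless'' part of $G$ by Theorem~\ref{BJJ} and cover the rest by the tree-of-Eulerian-graphs machinery --- but the way you propose to split $E(G)$ is genuinely different from the paper's and, as stated, leaves a real gap. The paper does \emph{not} partition $E(G)$ into two disjoint pieces. It covers $G-X-B-S$ by Theorem~\ref{BJJ} and then, separately, covers the edge set $X'\cup B'\cup S'$ using signed circuits drawn from the auxiliary graph $G'=T\cup X$, where $T$ is a spanning tree of $G-X$. The two pieces overlap on the tree edges, and Lemma~\ref{l:main} bounds the length of the second cover by $2\cdot|E(G')|=2(|V(G)|-1+|X|)$, \emph{not} by $2$ times the number of edges actually being covered. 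The final arithmetic $5/3\,|E(G)|+2|V(G)|+1/3\,|X|-2\le 11/3\,m-5/3\,\epsilon_N$ (via $m\ge|X|+|V|-1$) is what makes the bound come out; the overlap is essential, not incidental.

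Your version instead posits a disjoint decomposition $E(G)=E(H)\sqcup E(R)$ with $H$ a spanning disjoint union of trees of Eulerian graphs containing all negative edges and all bridges, and $R$ bridgeless and all-positive, and then covers $H$ by signed circuits lying inside $H$. Two things go wrong. First, the existence of such an $H$ is not established and is in general doubtful: the set $X\cup B$ alone may fail to form a union of trees of Eulerian graphs, and adding tree edges to repair that can destroy bridgelessness of the complement; the paper avoids this tension precisely by allowing $T$ to be shared with $G-X-B-S$. Second, even where such an $H$ exists, the auxiliary lemmas do not directly give what you want: Lemma~\ref{l:eventree} produces only \emph{weak} covers (bridges of $H$ are not covered), while Lemma~\ref{l:gentree} requires every leaf balloon of $H$ to be odd --- a flow-admissibility-type condition on $H$ \emph{as a standalone signed graph}, which you have not ensured. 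In the extreme case $H$ consists only of isolated negative edges, which have no internal signed circuit at all, so every circuit covering them must use edges of $R$, and your accounting collapses. Finally, note that your arithmetic silently proves too much: $2|E(H)|+\tfrac53|E(R)|=\tfrac53 m+\tfrac13|E(H)|\le 2m$, which would be a \emph{strictly stronger} bound than $\tfrac{11}{3}m-\tfrac53\epsilon_N$ whenever $\epsilon_N<m$; adding $2m$ and $\tfrac53(m-\epsilon_N)$ as you did overcounts (those two bounds cannot both be tight under $|E(H)|+|E(R)|=m$). A strictly stronger conclusion falling out of a looser version of the paper's argument should itself be a warning sign that the disjoint decomposition you need does not exist in the generality you assume.
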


\noindent In Section~\ref{sec5} we provide several variants of Theorem~\ref{thm:main}
comparable to the results provided by Cheng et al.~\cite{CLLZ}. We note that for bridgeless cubic signed graphs, it is possible to obtain even better bounds on the length of a short signed circuit cover as suggested by a recently announced result by Wu and Ye \cite{Wu}.

Despite these improvements, the bounds on the length of the shortest
signed circuit cover are probably far from tight. Indeed, we are not
aware of any flow-admissible signed graph $(G,\sigma)$ with a shortest
cycle cover of length exceeding $5/3\cdot |E(G)|$. An example that
attains this value is the Petersen graph with five negative edges
forming a 5-cycle~\cite{MRRS}. 

% ==================== SECTION 2 AUXILIARY STRUCTURES===============================

\section{Covering auxiliary structures}
In this section we will introduce the necessary notions and prove
several key lemmas. Graphs in this paper may have parallel edges and
loops. For convenience, we define the \emph{subtraction} of two
subgraphs $A_1$ and $A_2$ of a graph, denoted by $A_1-A_2$, as
follows: from $A_1$, we remove all edges that are contained in $A_2$
and delete isolated vertices.  A \emph{cut-vertex} is a vertex whose
deletion increases the number of components of a graph. Note that a
vertex incident with a loop is not necessarily a cut-vertex. A graph
is \emph{2-connected} if it has no cut-vertex. Note that graphs
consisting either of a vertex with loops incident to it or of an edge
with loops incident to its end-vertices are both 2-connected.  A
\emph{cycle} is a graph with all degrees even. A connected cycle is an
\emph{Eulerian graph} (in particular, the graph with a single vertex
is an Eulerian graph, and we refer to it as \emph{trivial}). A
\emph{circuit} is a connected 2-regular graph (note that a loop is a
circuit). Recall that a \emph{signed circuit} is a signed graph that
is either a balanced circuit or a barbell.

Let $H$ be a {{connected}} signed graph. Delete all bridges of $H$ to obtain
$H'$. We let $\negat H$ denote the number of negative edges in $H$, and
let $\negatx H$ denote the number of negative edges in $H'$.

If each component of $H'$ is an Eulerian graph, then $H$ is a
\emph{tree of Eulerian graphs}. In the further text the notion of a balloon will be crucial. A \emph{balloon of $H$} (or simply a \emph{balloon} if $H$ is clear from the context) is either a negative loop of $H'$ or a component of $H'$ from which all the loops have been deleted.
If each balloon of $H'$ is either an isolated vertex or a
circuit, then $H$ is a \emph{tree of circuits}. Note that a tree of
circuits is a special case of a tree of Eulerian graphs.

Let $B$ be a balloon of a tree of Eulerian graphs $H$. We say that $B$
is \emph{trivial} if $B$ is a single vertex, and \emph{non-trivial}
otherwise. The \emph{valency of $B$} is $1$ if $B$ is a negative loop,
and otherwise it is the number of its incident bridges and loops. The
balloon $B$ is a \emph{leaf balloon} if its valency is $1$, and an
\emph{inner balloon} otherwise. In the case that $B$ is a circuit, we
use the terms \emph{leaf circuit} or \emph{inner circuit}. The balloon
$B$ is \emph{even} or \emph{odd} according to $\negat B$ being even or
odd, respectively. An \emph{endblock} of $H$ is an inclusion-wise maximal subgraph of $H$ which contains at most one cut-vertex and is different from a loop.

The following definitions and two lemmas are based on the ideas
of~\cite{MRS1} and \cite{MRS2}. Suppose for now that $H$ is a tree of
circuits.  Let $c(H)$ and $u(H)$ denote the number of circuits and the
number of unbalanced circuits of $H$, respectively. Suppose that $H$
is a tree of circuits such that every leaf
circuit is unbalanced. Note that if $H$ has at least two leaf circuits, it is flow-admissible.  Let
$\vec{H}$ be a digraph obtained from $H$ by the following operations:
\begin{itemize}
\item orienting the edges of each circuit in $H$ in such a way that it
  becomes a directed circuit,
\item replacing all bridges by directed 2-circuits.
\end{itemize}
Since $\vec{H}$ is an Eulerian digraph, it admits an Eulerian trail
$W$. Traverse $W$ starting from some vertex of $\vec{H}$ and enumerate
all circuits of $H$ as
\begin{equation*}
  C_0,C_1,\dots,C_{k-1},
\end{equation*}
in the order in which $W$ visits them for the first time. (Thus, each
circuit of $H$ appears exactly once in the sequence.) Let us say that
two leaf circuits $C_i, C_j$ are \emph{consecutive leaf circuits} if
none of $C_{i+1},\dots,C_{j-1}$ is a leaf circuit (where the counting
is modulo $k$). \emph{Consecutive unbalanced circuits} are defined in
an analogous way.
For any two indices $s,t \in \Setx{0,\dots,k-1}$, let $W_{st}$ be a
shortest subtrail of $W$ starting at a vertex of $C_s$ and ending at a
vertex of $C_t$.
Define $B_{st}$ to be the barbell in $H$ obtained as the union of
$C_s$, $C_t$ and the path corresponding to $W_{st}$.

\begin{lemma}\label{l:leaf}
  Let $H$ be a tree of circuits with $u(H) \neq 1$ such that every
  leaf circuit is unbalanced. Then $H$ admits a signed circuit cover
  $\CC$ such that the width of the edges of leaf circuits and bridges
  with respect to $\CC$ is $2$, and the width of the edges of inner
  circuits with respect to $\CC$ is~$1$.
\end{lemma}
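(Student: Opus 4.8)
The plan is to use the enumeration $C_0,\dots,C_{k-1}$ of circuits and build the cover $\CC$ from two kinds of barbells, one for leaf circuits and one for bridges, relying on the Eulerian trail $W$ to keep each edge's width under control. First I would handle the leaf circuits: list the leaf circuits in the cyclic order in which they appear among $C_0,\dots,C_{k-1}$, say $L_0,L_1,\dots,L_{r-1}$ (with indices mod $r$), and for each consecutive pair $L_i,L_{i+1}$ include in $\CC$ the barbell $B_{st}$ joining them, where $s,t$ are the indices of $L_i,L_{i+1}$ and the connecting path comes from the subtrail $W_{st}$. Since every leaf circuit is unbalanced, each $B_{st}$ is a genuine signed circuit (a short or long barbell), and since $u(H)\neq 1$ there are at least two unbalanced circuits, so $r\ge 2$ and this family is nonempty and well-defined. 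Each leaf circuit $L_i$ lies in exactly the two barbells coming from its two neighbouring consecutive pairs, so leaf-circuit edges get width exactly $2$.

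The subtlety is the bridges and inner circuits. The paths used in the leaf barbells are subtrails of a single Eulerian trail $W$, and I would argue that the union of the connecting paths of the $r$ consecutive-leaf barbells, taken with multiplicity, covers every bridge exactly twice and every inner-circuit edge at most once: going once around the cyclic sequence of leaf circuits, the concatenation of the $W_{st}$'s traverses $W$ essentially once, and since in $\vec H$ every bridge was doubled into a $2$-circuit while circuit edges were used once, each bridge is crossed twice and each inner-circuit edge at most once. This already gives bridges width $2$ and inner edges width $\le 1$. To bring inner-circuit edges up to width exactly $1$ — and to catch any bridge or inner circuit not met by the leaf barbells at all (e.g.\ an inner circuit that $W$ enters and leaves through the same attachment) — I would add, for each inner circuit $C$ not yet fully covered, one more barbell: pair $C$ with a nearest unbalanced circuit (which exists since there are $\ge 2$ of them and none is a balanced leaf) using the trail, or, more cleanly, observe that the already-chosen leaf barbells can be rerouted so their connecting paths sweep through every inner circuit. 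I would prefer the bookkeeping where the barbells are defined so that their paths exactly trace $\vec H$'s Eulerian trail once; then every edge of an inner circuit is covered exactly once and every bridge exactly twice automatically.

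The cleanest way to organise this, which I would actually write up, is an induction on the number of leaf circuits (equivalently on $c(H)$). In the base case $H$ has exactly two leaf circuits and a path between them through a chain of inner circuits; take the barbell $B$ using the whole Eulerian trail as connecting structure (so it passes through every inner circuit once and every bridge twice) — this single signed circuit, together with its "reverse" copy restricted to the two leaf circuits, realises the required widths. Wait: more carefully, in the two-leaf case I take $\CC=\{B_1,B_2\}$ where $B_1$ goes "one way" around $W$ and $B_2$ the "other", so that together the two leaf circuits are covered twice, the bridges twice, and inner circuits once. For the inductive step, pick a leaf circuit $L$, let $C$ be the nearest branch balloon or the next circuit along $W$, split $H$ at the bridge separating $L$'s pendant part, apply induction to the smaller tree of circuits (which still has all leaf circuits unbalanced, using that $u(H)\neq1$ to keep at least two unbalanced leaves after the split, possibly rebalancing signs), and then extend one barbell of the sub-cover by appending the path to $L$ and adding a second barbell local to $L$, adjusting exactly so that $L$ and the new bridge acquire width $2$ and nothing else changes.

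The main obstacle I expect is exactly this splitting step: when I detach a leaf circuit $L$, the residual tree of circuits might end up with only one unbalanced circuit (violating $u(H)\neq1$) or with a newly created leaf circuit that is balanced. Handling that requires care — one must either choose which leaf to detach so as to preserve "at least two unbalanced leaves", or, when forced down to a single unbalanced circuit, fall back on a direct two-barbell construction along the Eulerian trail as in the base case. Making the induction hypothesis strong enough to survive the split (so that the sub-cover has a designated barbell whose connecting path ends at the attachment vertex, ready to be extended) is where the real work lies; the width arithmetic itself is routine once the right decomposition is in place.
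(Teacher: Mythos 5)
Your first paragraph \emph{is} the paper's actual proof: enumerate the leaf circuits in the cyclic order in which the Eulerian trail $W$ of $\vec H$ first meets them, and take $\CC$ to be the barbells $B_{st}$ on consecutive leaf circuits. The paper then says ``Clearly, edges of leaf circuits and bridges have width $2\dots$ while edges of an inner circuit have width $1$'' and stops. Your Eulerian-trail observation in the second paragraph — that the connecting subtrails $W_{st}$ between consecutive leaf circuits tile $W$ minus the leaf-circuit blocks, so each bridge (which became a directed $2$-circuit in $\vec H$) appears twice and each inner-circuit edge once — is exactly the content hidden behind that ``Clearly,'' and once you say it you should commit to it. Your worry that some inner edge might be ``not met at all'' is unfounded ($W$ visits every edge), though your instinct that something delicate is going on is not wrong: for the construction to produce actual barbells one needs each $W_{st}$ to project to a simple path in $H$, and if two leaf circuits hang off the same vertex of an inner circuit, one of the consecutive $W_{st}$'s wraps once around that inner circuit and is not a path. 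Neither the paper nor your write-up addresses this, so if you keep the direct construction you should add a sentence ruling it out or handling it.

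Where your proposal genuinely breaks down is the pivot, in paragraphs three and four, to an induction on the number of leaf circuits. You never complete it, and you yourself identify the hard step — detaching a leaf while keeping every remaining leaf circuit unbalanced and $u\neq 1$, and having a designated barbell ready to extend — as ``where the real work lies.'' That is a gap, not a proof. The induction is also unnecessary: the direct construction is already a one-step argument. You also omit the base case the paper dispatches first, namely $H$ balanced: then, since every leaf circuit is unbalanced, $H$ has no leaf circuit, hence $c(H)=1$ and $H$ is a single balanced circuit, and one sets $\CC=\{H\}$. In short, keep your opening paragraph, finish the width count using the Eulerian-trail tiling you already sketched (including a remark on why the $W_{st}$ are paths), add the balanced base case, and delete the induction.
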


\begin{proof}
  If $H$ is balanced, then since each leaf is unbalanced, we have $c(H)=1$.
  As $u(H) \neq 1$ the circuit is balanced and we set $\CC=\{H\}$.
  Thus, we may assume that $u(H)\geq 2$.

  Define $\CC$ as the collection of all barbells $B_{st}$ such that
  $C_s$ and $C_t$ are consecutive leaf circuits. Clearly, edges of
  leaf circuits and bridges have width $2$ with respect to $\CC$
  while edges of an inner {{circuit}} have width $1$ with respect to $\CC$.
\end{proof}

A collection $\CC$ of subgraphs of a signed graph $H$
is a \emph{weak signed circuit cover} if all the elements of
$\CC$ are signed circuits and each non-bridge edge of $H$ is covered by $\CC$.

\begin{lemma}
  \label{l:43}
  Let $H$ be a tree of circuits with $u(H)$ even. Then $H$ admits
  three weak signed circuit covers $\CC_1$, $\CC_2$, $\CC_3$ of total
  width at most $4$ such that $\CC_1$ covers all negative loops
  exactly twice.
\end{lemma}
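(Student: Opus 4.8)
\emph{Proof plan.} The plan is to induct on the number of circuits of $H$: first strip off the balanced leaf circuits, and then treat the rest by combining the cover produced by Lemma~\ref{l:leaf} with two further covers built from perfect matchings of the unbalanced circuits.

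First I would reduce to the case in which every leaf circuit of $H$ is unbalanced. If $H$ has a balanced leaf circuit $C$ (or a positive loop $C$, which is a balanced signed circuit of length one), then deleting $C$ --- together with the bridge attaching it, in the leaf-circuit case --- yields a smaller tree of circuits $H^-$ with $u(H^-)=u(H)$ still even and with exactly the same negative loops. By induction $H^-$ has three covers with the required properties; adjoining the singleton $\{C\}$ to each of them gives covers of $H$, since the edges of $C$ then receive total width exactly $3$, $C$ carries no negative loop, and nothing else is affected. The base of the induction is $u(H)=0$: since leaf circuits are unbalanced, $H$ is then a tree or a single balanced circuit, and $\CC_1=\CC_2=\CC_3=\{\text{all circuits of }H\}$ works, with total width at most $3$ and no negative loops.

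So assume $u(H)\ge 2$ and that every leaf circuit is unbalanced. Since $u(H)\neq1$, Lemma~\ref{l:leaf} provides a signed circuit cover $\CC_1$ with leaf-circuit and bridge edges of width $2$ and inner-circuit edges of width $1$; as every negative loop is an unbalanced leaf circuit, $\CC_1$ covers each negative loop exactly twice, as required. It remains to produce two weak signed circuit covers $\CC_2,\CC_3$ that, together with $\CC_1$, keep the total width at $4$ --- so $\CC_2$ and $\CC_3$ jointly may add at most $2$ on leaf-circuit and bridge edges and at most $3$ on inner-circuit edges. Because $u(H)$ is even the unbalanced circuits have perfect matchings, and for each of $\CC_2,\CC_3$ I would fix one such matching and realise it as a family of barbells --- one barbell per matched pair, formed by the two circuits and a path joining them through the tree of circuits --- together with the singleton $\{C\}$ for every balanced circuit $C$. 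Such a family is a weak signed circuit cover: balanced circuits are covered by their singletons, each unbalanced circuit is an end of exactly one barbell and so is covered in full, and a leaf circuit (valency $1$) is covered exactly once, which is precisely the budget left for it.

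The crux, and the step I expect to be the main obstacle, is choosing the two matchings and routing their barbell paths so that the width bounds survive on inner-circuit edges and on bridges. Along a bridge $e$, the number of matched pairs separated by $e$ always has the parity of the number of unbalanced circuits on one side of $e$, so by pairing unbalanced circuits within subtrees of the tree of circuits whenever possible one can keep this number equal to that parity, hence at most $1$; then each of $\CC_2,\CC_3$ uses $e$ at most once and all three covers use it at most $4$ times in total. Inside an inner circuit $X$, the path of a barbell passing through $X$ uses one of the two arcs between the bridges it enters and leaves by, and the danger is that these through-arcs pile up on a single edge. This should be handled by pairing the ``leftover'' crossing strands at each circuit in a non-crossing way and routing the through-paths along the empty side of an outermost chord, peeling off one arc at a time, so that within a fixed circuit the through-arcs of a single family are pairwise edge-disjoint; arranging moreover --- using the freedom in the two matchings and the routings --- that the through-arcs from $\CC_2$ avoid those from $\CC_3$ on each inner circuit, one bounds the width on an inner-circuit edge by $1$ (from $\CC_1$) $+\,1+1$ (one barbell end or one balanced singleton in each of $\CC_2,\CC_3$) $+\,1$ (at most one through-arc) $=4$. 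Once the matchings and routings are pinned down, what remains is a finite check of the width over the edge types --- leaf circuit, inner circuit, bridge --- and over whether the circuit concerned is balanced or unbalanced.
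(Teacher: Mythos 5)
Your overall skeleton matches the paper's: reduce to the case where every leaf circuit is unbalanced, take $\CC_1$ from Lemma~\ref{l:leaf}, and build $\CC_2,\CC_3$ by pairing up the unbalanced circuits (possible since $u(H)$ is even) into barbells, keeping balanced circuits as singletons. But the step you yourself flag as ``the crux'' is a genuine gap, and it is precisely the content of the lemma. For your width count to close, every inner-circuit edge may carry at most \emph{one} connecting path in total over both $\CC_2$ and $\CC_3$ (it already gets $1$ from $\CC_1$ and $1$ from each family via a barbell end or a balanced singleton), and every bridge at most two connecting paths in total. Your justification for this is a collection of local heuristics (``pairing within subtrees whenever possible'', ``non-crossing pairing of leftover strands'', ``peeling off an outermost chord'', ``using the freedom in the two matchings''), but no construction is given that makes these choices globally consistent. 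Note that the routing problem is genuinely constrained: if two strands through an inner circuit interleave (their four attachment bridges alternate around the circuit), they cannot be routed edge-disjointly on that circuit at all, so the admissibility of the matchings is a global property, not something fixable circuit by circuit; and the cross-family condition (through-arcs of $\CC_2$ avoiding those of $\CC_3$ on every inner circuit) is asserted without any argument. As written, the existence of two matchings plus routings with the required properties is exactly what remains to be proved.

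The paper obtains all of this at once from the Eulerian-trail device set up before Lemma~\ref{l:leaf}: orient each circuit as a directed circuit, replace each bridge by a directed $2$-circuit, take an Eulerian trail $W$ of $\vec H$, list the unbalanced circuits $C_{i_0},\dots,C_{i_{u-1}}$ in order of first visit, and let $\CC_2$ use the consecutive pairs $(i_0,i_1),(i_2,i_3),\dots$ and $\CC_3$ the shifted pairs $(i_1,i_2),\dots,(i_{u-1},i_0)$. All $u$ connecting paths are then segments of the single trail $W$, hence edge-disjoint in $\vec H$; since an inner-circuit edge occurs once in $\vec H$ and a bridge twice, the bounds (one path per inner-circuit edge, two per bridge, over both families combined) follow immediately. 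If you want to salvage your write-up, the cleanest fix is to define your two matchings and routings from such a trail rather than trying to build them by local non-crossing choices. One further small point: your reduction only strips balanced \emph{leaf circuits}, which can leave balanced pendant trees hanging off bridges; the paper instead removes any balanced component of $H-e$ for a bridge $e$, which guarantees that afterwards every leaf balloon is an unbalanced circuit or negative loop --- without this, the trail segments between consecutive unbalanced circuits (and likewise your connecting paths) may double back along pendant bridges and fail to be paths, so the constructed objects need not be barbells.
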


\begin{proof}
  We proceed by induction on the number of edges of $H$. If $H$ is
  balanced, then we set $\CC_1, \CC_2$, and $\CC_3$ to be the set of circuits of $H$.
  If there is a bridge $e$ of $H$ such that some component of $H-e$ is
  balanced, then we cover each component of $H-e$ by the induction
  hypothesis and define each $\CC_i$ as the union of the two obtained respective covers.
  Therefore, we may assume that every leaf balloon of $H$ is
  unbalanced. Note that $u(H)\geq 2$.

  Let us define the signed circuit covers $\CC_1$, $\CC_2$ and
  $\CC_3$.  Let $\CC_1$ be the weak signed circuit cover defined in
  Lemma~\ref{l:leaf}. The other two weak signed circuit covers $\CC_2$
  and $\CC_3$ are defined as follows.

  Let $\KK$ be the collection of balanced circuits of $H$ and suppose
  that the unbalanced circuits of $H$ are $C_{i_0},\dots,C_{i_{u-1}}$,
  where $u=u(H)$ and $0 \leq i_0 < \dots < i_{u-1} \leq u-1$. If
  $B^*_{rs}$ denotes the barbell $B_{i_ri_s}$, then
  \begin{align*}
    \CC_2 &= \Setx{B^*_{01}, B^*_{23}, \dots, B^*_{u-2,u-1}}
            \cup \KK,\\
    \CC_3 &= \Setx{B^*_{12}, B^*_{34}, \dots, B^*_{u-1,0}} \cup \KK.
  \end{align*}

  Let us check that the total width of $\CC_1$, $\CC_2$ and $\CC_3$ is
  at most 4. If $e$ is an edge of a leaf circuit, it is covered twice
  by $\CC_1$, once by $\CC_2$ and once by $\CC_3$. If $e$ is a bridge,
  it is covered twice by $\CC_1$ and twice by $\CC_2\cup\CC_3$,
  because the paths $W_{i,i+1}$ ($0\leq i\leq u-1$) are edge-disjoint
  (in $\vec{H}$). For the same reason, an edge $e$ of an inner circuit
  is covered once by some $W_{i,i+1}$ and once by a circuit of each of
  $\CC_2$ and $\CC_3$. Since by Lemma~\ref{l:leaf} the edges of inner
  circuits have width $1$ in $\CC_1$, we have found the required
  covers.
\end{proof}

We now proceed to trees of Eulerian graphs.

\begin{lemma}\label{l:rozklad}
  Let $A$ be a $2$-connected non-trivial signed Eulerian graph without
  ne\-ga\-ti\-ve loops, and let $v$ be a vertex of $A$. Then either
  $A$ is a circuit, or it can be edge-decomposed into two nontrivial
  Eulerian graphs $A_1$ and $A_2$ such that $A_1$ contains $v$ and
  $\negat{A_2}$ is even.
\end{lemma}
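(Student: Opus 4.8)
The plan is to reduce the lemma, through a handful of easy special cases, to the single task of finding a balanced circuit $C$ of $A$ whose removal keeps $A$ connected, and then to produce such a circuit from the hypotheses by a recombination‑and‑parity argument; I would run the whole proof as an induction on $\size{E(A)}$. First come the reductions. If $A$ is a circuit there is nothing to prove, so assume not; then some vertex of $A$ has degree at least $4$. If $A$ is balanced, give it an all‑positive signature, so every subgraph is balanced, and take $C$ to be a circuit with $A-C$ connected (one exists, see the next paragraph), which settles this case; so I may assume $A$ is unbalanced. If $A$ has a positive loop $L$, set $A_2=L$ and $A_1=A-L$: since $A$ is $2$‑connected and $A\neq L$, the graph $A-L$ is connected, nontrivial, Eulerian and contains $v$, and $L$ is a balanced nontrivial Eulerian graph. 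As $A$ has no negative loops, $A$ may now be assumed loopless. Finally, if $\deg_A(v)=2$, a small separate argument is needed — for instance, suppress $v$ (replace the path through $v$ by one edge $e^{*}$, signed so that balance is preserved), apply the induction hypothesis to the smaller $2$‑connected Eulerian non‑circuit obtained, and lift the decomposition back across $e^{*}$, with some care about which part $e^{*}$ falls into. So from now on $A$ is a loopless unbalanced $2$‑connected Eulerian non‑circuit and $\deg_A(v)\ge 4$.

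The device that drives the main argument is an observation about Euler circuits. For any Euler circuit $W$ of $A$, let $C$ be the first circuit it completes, i.e.\ the stretch of $W$ between the first two occurrences of the first vertex $W$ repeats. Then $A-C$ is connected — its edges are exactly those $W$ traverses before and after $C$, and they fit together into a single closed trail — and $A-C$ is nontrivial because $A\neq C$. Conversely, every circuit $C$ with $A-C$ connected is the first circuit completed by a suitable Euler circuit: starting from a common vertex of $C$ and $A-C$, go once around $C$ and then along an Euler circuit of the nontrivial connected even graph $A-C$. Consequently it suffices to exhibit a \emph{balanced} circuit $C$ of $A$ with $A-C$ connected; then $A_1=A-C$ and $A_2=C$ are nontrivial Eulerian graphs with $\negat{A_2}$ even, and $v\in V(A_1)$ because $\deg_A(v)\ge 4$ forces $v$ to keep an edge outside $C$.

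Next I would show that $A$ has at least one balanced circuit. Decompose $A$ into edge‑disjoint circuits; there are at least two, and since $A$ is connected, two of them, $C$ and $C'$, share a vertex. If $C$ and $C'$ share at least two vertices, pick two of them, $x$ and $y$, consecutive along $C$, let $Q$ be the $x$--$y$ arc of $C$ with no other shared vertex in its interior, and let $R_1,R_2$ be the two arcs into which $x$ and $y$ cut $C'$; then $Q\cup R_1$ and $Q\cup R_2$ are circuits, and since $\negat{Q\cup R_1}+\negat{Q\cup R_2}+\negat{C'}$ is even, one of these three circuits is balanced. If $C$ and $C'$ share only the vertex $x$, then $2$‑connectivity provides a path $R$ in $A-x$ from $V(C)\setminus\{x\}$ to $V(C')\setminus\{x\}$ with no internal vertex on $C\cup C'$; splitting $C$ and $C'$ at $x$ and at the endpoints of $R$ yields four circuits of the form $(\text{arc of }C)\cup R\cup(\text{arc of }C')$, and a parity count among two of these and $C'$ again forces one of them to be balanced. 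Either way $A$ has a balanced circuit $B$.

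The remaining, and hardest, step is to make such a circuit non‑separating. I would choose $B$ balanced so as to minimise the number of components of $A-B$, and aim to show this number is $1$. If not, take a component $R$ of $A-B$; by $2$‑connectivity $R$ meets $B$ in at least two vertices, a shortest path $P$ in $R$ between two of them meets $B$ only at its ends, and $P$ together with either of the two $B$‑arcs joining those ends is a circuit, the two circuits having the same parity because $\negat{B}$ is even. Swapping the appropriate $B$‑arc for $P$ ought to give a balanced circuit separating $A$ into fewer components, contradicting minimality. The genuine obstacle is precisely the bookkeeping in that last sentence: a component of $A-B$ can be attached to the rest of $A$ only along the very $B$‑arc one wishes to delete, so the arc to be swapped must be chosen with all the attachment patterns in mind, and in the most stubborn configurations one may be pushed to let $A_2$ be a union of two balanced circuits rather than a single one. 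Once a balanced circuit $C$ with $A-C$ connected is in hand, the decomposition $A_1=A-C$, $A_2=C$ is the one required.
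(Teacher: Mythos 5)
Your plan hinges on a claim the paper never needs and that you do not actually establish: that a $2$-connected, loopless, unbalanced Eulerian non-circuit $A$ contains a \emph{balanced} circuit $C$ with $A-C$ connected (indeed, one avoiding some edge at $v$). Your preliminary steps are fine — the existence of some balanced circuit via two circuits of a circuit decomposition sharing a vertex, and the Euler-trail observation that the ``first circuit completed'' is non-separating — but the decisive step, making a balanced circuit non-separating, is only a sketch, and the sketched exchange can fail for a reason beyond the attachment-pattern bookkeeping you flag. If $B$ is balanced, $R$ is a component of $A-B$, $P$ is a path of $R$ meeting $B$ exactly at its two ends, and $B_1,B_2$ are the two $B$-arcs between those ends, then $\negat{P\cup B_1}$ and $\negat{P\cup B_2}$ have the same parity, but that common parity may be \emph{odd}; in that case neither swap $B_j\cup P$ is balanced and your minimality argument has no move to make. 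Your fallback, letting $A_2$ be a union of two balanced circuits, does not by itself satisfy the lemma either: the statement requires $A_2$ to be an Eulerian graph, which in this paper means a \emph{connected} even graph, so two disjoint balanced circuits are not admissible. (The degree-$2$ reduction for $v$ is also only gestured at — suppressing $v$ may create a loop, possibly negative, and the induction gives you a prescribed vertex in $A_1$, not a prescribed edge $e^*$ — but that is a repairable corner case; the non-separating balanced circuit is the real gap.)

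The paper's proof avoids this difficulty precisely by not insisting that $A_2$ be a circuit. It takes any circuit $C$ through $v$ and a component $A'$ of $A-C$: if $\negat{A'}$ is even, set $A_2=A'$; if $\negat{A'}$ is odd, pick $v_1,v_2\in V(C)\cap V(A')$ (available by $2$-connectivity), split a closed Eulerian trail of $A'$ at $v_1,v_2$ into trails $W_1,W_2$, and take $A_2=W_1\cup P$ where $P$ is the $v_1$--$v_2$ arc of $C$ avoiding $v$ internally and $W_1$ is chosen so that $\negat{W_1}+\negat P$ is even — the freedom to choose $W_1$ or $W_2$ is exactly what neutralises the parity obstruction that blocks your exchange. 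So either prove your strengthened claim (which looks harder than the lemma itself and is currently unsupported), or relax $A_2$ to a non-circuit Eulerian subgraph along the lines above.
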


\begin{proof}
  Since $A$ is Eulerian, there is a circuit $C$ containing $v$. Let
  $A'$ be a component of $A-C$ (recall that $A-C$ has no isolated
  vertices due to our definition of graph subtraction). If $\negat{A'}$
  is even, $A$ can be decomposed into $A'$ and $A-A'$.

  Suppose then that $\negat{A'}$ is odd. By the assumption of the lemma,
  $A'$ is not a loop, and by the 2-connectivity of $A$,
  $|V(C) \cap V(A')|\geq 2$. Let $v_1$ and $v_2$ be vertices of
  $C\cap A'$. Let $P$ be a $v_1$-$v_2$-path of $C$ that does not
  contain $v$ as an internal vertex. Since $A$ is Eulerian, $A'$ is
  Eulerian as well, so it admits a closed Eulerian trail $W$. The
  trail $W$ consists of two $v_1$-$v_2$ trails, $W_1$ and $W_2$. Since
  $\negat{A'}$ is odd, we may assume without loss of generality that
  $\negat{W_1}$ has the same parity as $\negat P$. The sought
  decomposition is $W_1\cup P$ and $A-(W_1\cup P)$.
\end{proof}

To deal with loops we define two operations on Eulerian graphs:
compression and decompression.  Let $(G,\sigma)$ be a signed Eulerian
graph with a non-loop edge, and let $L$ be a set of negative loops of
$G$. Let $f:\,L \to (E(G)-L)$ be a function that assigns to each loop
edge an adjacent non-loop edge of $G$. We say that $(G^*,\sigma^*)$ is
a \emph{compression of $(G,\sigma)$ with respect to $f$} if $G^*=G-L$
and $\sigma^*(e):=\sigma(e)\cdot(-1)^{|f^{-1}(e)|}$ for every edge $e$
of $G^*$. Let $(G_0^*,\sigma^*)$ be a subgraph of
$(G^*,\sigma^*)$. (We take the liberty of using the same symbol for
the induced signature of the subgraph.) The \emph{decompression of
  $(G_0^*,\sigma^*)$ with respect to $f$} is the subgraph
$(G_0,\sigma)$ of $(G,\sigma)$ induced by the edges
$E(G^*_0) \cup f^{-1}(E(G^*_0))$. Note the following.

\begin{observation}\label{obs:compress}
  Compression and decompression preserve the parity of the number of negative edges. If two subgraphs partition the compressed graph, then
  the decompressed subgraphs partition the original graph. If the original graph is $2$-connected, then so is the compressed graph.
\end{observation}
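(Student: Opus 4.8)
The plan is to verify each of the three assertions directly from the definition of compression and decompression. I would set up notation once: $(G,\sigma)$ is a signed Eulerian graph with a non-loop edge, $L$ is a set of negative loops, $f\colon L\to E(G)-L$ assigns to each loop an adjacent non-loop edge, $(G^*,\sigma^*)$ is the compression with $G^*=G-L$ and $\sigma^*(e)=\sigma(e)\cdot(-1)^{|f^{-1}(e)|}$, and for a subgraph $(G_0^*,\sigma^*)$ of $(G^*,\sigma^*)$ the decompression is the subgraph of $(G,\sigma)$ induced by $E(G_0^*)\cup f^{-1}(E(G_0^*))$.

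First, for the parity claim I would compute the number of negative edges of $(G^*,\sigma^*)$ directly. An edge $e$ of $G^*$ is negative in $\sigma^*$ exactly when $\sigma(e)\cdot(-1)^{|f^{-1}(e)|}=-1$, i.e.\ when $\sigma(e)=-1$ and $|f^{-1}(e)|$ is even, or $\sigma(e)=1$ and $|f^{-1}(e)|$ is odd. Summing, the parity of the number of negative edges of $(G^*,\sigma^*)$ equals the parity of $\bigl(\text{number of negative non-loop edges of }G\bigr)+\sum_{e}|f^{-1}(e)|=\negatx{}$-free count plus $|L|$; since every edge of $L$ is negative, $|L|$ is precisely the number of negative edges of $G$ that were deleted, so the two parities agree. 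The same computation, restricted to the edge set of a subgraph $(G_0^*,\sigma^*)$ and its decompression $(G_0,\sigma)$, gives the parity statement for decompression, because the edges of $L$ lying in $G_0$ are exactly $f^{-1}(E(G_0^*))$, all of which are negative, and the signs on $E(G_0^*)$ are twisted by exactly the parity of the number of those loops attached at each edge.

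Second, for the partition claim: if $(A^*,B^*)$ partitions $(G^*,\sigma^*)$ (edge-disjointly, covering all edges of $G^*$), then $E(A^*)\cup E(B^*)=E(G^*)=E(G)-L$ and $E(A^*)\cap E(B^*)=\emptyset$. The decompressions $A$ and $B$ have edge sets $E(A^*)\cup f^{-1}(E(A^*))$ and $E(B^*)\cup f^{-1}(E(B^*))$. These are disjoint because $E(A^*)\cap E(B^*)=\emptyset$ and $f^{-1}$ of disjoint sets are disjoint, and their union is $(E(G)-L)\cup f^{-1}(E(G)-L)=(E(G)-L)\cup L=E(G)$ since $f$ is a total function on $L$ with values in $E(G)-L$, so every loop of $L$ lies in $f^{-1}$ of its image. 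Hence $A$ and $B$ partition $G$.

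Third, for the $2$-connectivity claim: compression only deletes loops, and by the conventions fixed in this section (a vertex incident with a loop is not necessarily a cut-vertex, and $2$-connectedness means no cut-vertex), deleting loops cannot create a cut-vertex, since a loop contributes nothing to connectivity between distinct vertices. More precisely, $G^*$ is obtained from $G$ by deleting edges each of whose endpoints coincide, which changes neither the vertex set nor the connectivity of any induced subgraph on a vertex subset; so if $G$ has no cut-vertex, neither does $G^*$. I expect the only mildly delicate point to be bookkeeping in the parity argument — keeping straight that every element of $L$ is negative and that $f^{-1}$ partitions $L$ over the edges of $G^*$ — but this is routine once the counts are written out, and none of the three parts presents a genuine obstacle.
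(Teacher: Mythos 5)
Your verification is correct and is exactly the routine check the paper has in mind: the authors state Observation~\ref{obs:compress} without proof, and your three direct arguments (the mod-$2$ count using $\sum_e|f^{-1}(e)|=|L|$ with all edges of $L$ negative, the edge-set computation $(E(G)-L)\cup f^{-1}(E(G)-L)=E(G)$ for partitions, and the fact that deleting loops affects neither connectivity nor cut-vertices) are the intended justification. Only a cosmetic remark: the phrase ``$\negatx{}$-free count'' in your parity computation is garbled and should simply read that the deleted edges are the $|L|$ negative loops, whose count is re-absorbed through the sign twists $(-1)^{|f^{-1}(e)|}$.
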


The following lemma and its corollary generalise the result of \cite{MRS1} and \cite{MRS2} stating that every signed Eulerian graph $G$ with even number of negative edges admits a signed circuit cover of total length $4/3\cdot |E(G)|$.

\begin{lemma}\label{l:eventree}
  Let $H$ be a tree of Eulerian graphs such that $\negatx H$ is
  even. Then $H$ admits three weak signed circuit covers $\CC_1$,
  $\CC_2$, $\CC_3$ of total width at most $4$ such that
  $\omega(\CC_i)\leq 2$ for each $i\in\{1,2,3\}$ and $\CC_1$ covers all
  negative loops exactly twice.
\end{lemma}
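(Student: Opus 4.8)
The plan is to prove the lemma by induction on $\size{E(H)}$, the aim of the induction being to strip $H$ down to a tree of circuits and then invoke Lemma~\ref{l:43}. Observe first that if $\negatx H$ is even then $H$ contains an even number of unbalanced circuits, since the negative edges of $H'$ are partitioned among the circuits of $H$ and an unbalanced circuit accounts for an odd number of them; hence Lemma~\ref{l:43} applies, and one reads off from its proof that the three covers it produces additionally satisfy $\omega(\CC_i)\le 2$. Two cheap reductions come next. If $H$ is balanced, then $H'$ decomposes into balanced circuits and we set $\CC_1=\CC_2=\CC_3$ equal to this decomposition. If some bridge $e$ is such that a component of $H-e$ is balanced, then both components of $H-e$ are trees of Eulerian graphs with an even number of negative non-bridge edges; we apply the induction hypothesis to each and take componentwise unions, which is legitimate because bridges need not be covered. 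After these reductions every leaf balloon of $H$ is unbalanced, and we may assume $H$ is not already a tree of circuits, so it contains a balloon $B$ that is neither trivial nor a single circuit.

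The core of the argument is to simplify such a $B$. Since a balloon carries no loops but $H$ may have negative loops at vertices of $B$, I would first apply a compression (Observation~\ref{obs:compress}): this removes all these loops, preserves the parity of the number of negative non-bridge edges, and shrinks the instance; after covering the compressed graph we decompress, and the only point needing attention is that the decompression of a signed circuit must again be a signed circuit, which dictates the choice of the assignment $f$ and, when two loops are sent to the same edge, splits a decompressed balanced circuit into the circuit itself plus a long barbell through the two pendant loops. Once $B$ is loopless, there are two cases. If $B$ has a cut-vertex $w$, split $H$ at $w$ into trees of Eulerian graphs $H_1$ and $H_2$; if both have an even number of negative non-bridge edges, recurse on each and combine edge-disjointly. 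If both numbers are odd, instead attach an auxiliary negative loop at $w$ on each side, recurse on the two resulting (smaller, and now even) instances, and glue the covers: for $\CC_1$, each of the two barbells through the auxiliary loop on one side is merged with one of the two barbells through the auxiliary loop on the other side, the two loops being replaced by the identification at $w$, and a short case check shows the result is again a barbell. If on the other hand $B$ is $2$-connected, apply Lemma~\ref{l:rozklad} with the chosen vertex $v$ taken at a place where $B$ meets the rest of $H$ to split off a nontrivial Eulerian graph $B_2$ with $\negat{B_2}$ even; since $B_2$ is Eulerian it is bridgeless, so a weak signed circuit cover of $B_2$ is a genuine cover of $B_2$ that composes cleanly with covers of $H-E(B_2)$, and we recurse on $B_2$ and on the components of the remainder, again inserting auxiliary negative loops at any attachment vertices of $B$ swallowed by $B_2$ so that every recursive instance keeps an even number of negative non-bridge edges. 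In all cases the removed subgraph is edge-disjoint from the recursive remainder, so the conditions $\omega(\CC_1,\CC_2,\CC_3)\le 4$ and $\omega(\CC_i)\le 2$ are inherited; and since auxiliary and decompressed loops only ever enter $\CC_1$, that cover covers each negative loop exactly twice.

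The step I expect to be the main obstacle is keeping the recursion honest: each recursive instance must again be a tree of Eulerian graphs (or a disjoint union of such) with an even number of negative non-bridge edges, so that the induction hypothesis — and not some not-yet-available odd-parity analogue — can be used. This is exactly the purpose of the auxiliary negative loops inserted at cut-vertices and at attachment vertices, and of arranging the compression so that signed circuits map to signed circuits. Once these reductions are set up correctly, checking the width bounds and the ``$\CC_1$ covers every negative loop twice'' clause is routine bookkeeping over edge-disjoint pieces.
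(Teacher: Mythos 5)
Your overall skeleton — base case via Lemma~\ref{l:43}, splitting at a cut-vertex with auxiliary negative loops to fix parity, and using compression together with Lemma~\ref{l:rozklad} for the $2$-connected case — is the right circle of ideas and mostly matches the paper. But there is a genuine gap in the step where you handle loops.

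You propose to compress $H$ to $H^*$, recursively cover $H^*$, and then \emph{decompress the covers}, i.e.\ turn each signed circuit $C^*$ of $H^*$ into a signed circuit of $H$ by reattaching the loops in $f^{-1}(E(C^*))$. This does not work in general. Decompression only yields a signed circuit when exactly one loop is reattached (then the circuit of $H^*$ that was balanced under $\sigma^*$ becomes unbalanced under $\sigma$ and the loop completes a short barbell). As soon as two or more loops land on a circuit (not merely on the \emph{same} edge), or on a barbell, the decompressed object is not a signed circuit. Your patch — replace the decompressed balanced circuit by the circuit itself plus a long barbell through two pendant loops — covers a path of the circuit twice, so the width bounds $\omega(\CC_i)\le 2$ and total width $\le 4$ are no longer preserved. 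Moreover, $f$ must be fixed before the covers of $H^*$ are produced, so you cannot ``choose $f$'' to avoid bad configurations, and you give no argument for barbells of $H^*$. Finally, the clause ``$\CC_1$ covers every negative loop exactly twice'' does not survive the decompression: $\CC_1$ on $H^*$ is only a weak cover, so the edge $f(\ell)$ may be covered once, or not at all, and then $\ell$ is not covered twice.

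The paper avoids all of this by never decompressing \emph{covers}. It decompresses a \emph{decomposition}: after establishing that a minimal counterexample is $2$-connected (the end-block reduction, which is essentially your cut-vertex-of-$B$ step), it compresses to $H^*$, applies Lemma~\ref{l:rozklad} to split $H^*$ into two Eulerian subgraphs $H^*_1,H^*_2$ with even $\negat$, decompresses \emph{these subgraphs} to $H_1,H_2$ — which by Observation~\ref{obs:compress} partition $H$, are smaller, and have even $\negat$ — and recurses on $H_1$ and $H_2$; the combined covers inherit all width and loop properties automatically because $H_1$ and $H_2$ are edge-disjoint. If instead $H^*$ is a circuit, it invokes Lemma~\ref{l:43} on $H$ itself (not on $H^*$), which applies because $H$ is then a tree of circuits with $u(H)$ even. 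Restructuring your argument this way — decompose first, recurse on the edge-disjoint pieces, and only then combine — removes the decompression difficulty. Your ``balanced-component'' reduction is sound but unnecessary (it is subsumed by the end-block reduction), and the ``attachment vertices swallowed by $B_2$'' bookkeeping disappears once one first forces the minimal counterexample to be $2$-connected, since then $H$ has no bridges at all.
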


\begin{proof}
  Let $H$ be a counterexample to the lemma with minimum number of
  edges. Clearly, $H$ has no balanced loops.

  Assume that $H$ is not $2$-connected, and consider an end-block $H_1$
  of $H$ incident with a cut-vertex $v$. Recall that by the definition of an end-block, $H_1$ is different from a loop. Let $H_2$ be obtained from
  $H$ by removing $V(H_1)-v$.

  Assume first that $\negatx{H_1}$ is even. Then since $\negatx H$ is
  even, so is $\negatx{H_2}$. Since both $H_1$ and $H_2$ contain fewer
  edges than $H$, they admit the weak signed circuit covers described
  in the statement of the lemma. The combination of these covers gives
  the sought weak signed circuit covers of $H$, a contradiction.

  Thus, both $\negatx{H_1}$ and $\negatx{H_2}$ are odd (hence each of
  $H_1,H_2$ has at least two edges). For $i\in\{1,2\}$, we add to $H_i$ a
  negative loop $e_i$ at $v$ to obtain $H'_i$. Both $H'_1$ and
  $H'_2$ contain fewer edges than $H$, and hence they admit the weak
  signed circuit covers with the requested properties. Note that $e_1$
  and $e_2$ must be covered by barbells, and each of them is covered
  twice by the first cover and once by the two other covers.
{{To obtain the sought weak signed circuit covers of $H$, we combine the found signed circuit covers of $H'_1$ and $H'_2$ by merging the barbells
  containing loops $e_1$ and $e_2$.
  This contradicts the
  minimality of $H$.}}

  It follows that $H$ is $2$-connected. There are several
  possibilities: either $H$ is a vertex with loops incident with it,
  or $H$ consists of an edge and loops incident with the end-vertices
  of the edge, or $H$ is bridgeless (on at least two vertices). In the
  first two cases, it is easy to find the weak signed circuit covers
  directly. Thus, it suffices to consider the third case, where $H$
  consists of one non-loop non-trivial balloon and possibly some {{unbalanced}} loops
  incident with it.

  Let $H^*$ be a compression of $H$ with respect to an arbitrary
  assignment $f$. Let $v$ be an arbitrary vertex of $H^*$. By
  Lemma~\ref{l:rozklad}, either $H^*$ is a circuit or it can be
  decomposed into two Eulerian graphs each having even number of
  negative edges. In the first case, Lemma~\ref{l:43} provides the
  desired covers. Assume thus that $H^*$ decomposes into nontrivial
  Eulerian graphs $H^*_1$ and $H^*_2$ with $\negat{H^*_1}$ and
  $\negat{H^*_2}$ even.  Let $H_1$ and $H_2$ be decompressions of
  $H^*_1$ and $H^*_2$.  By Observation~\ref{obs:compress},
  $\negat{H_1}$ and $\negat{H_2}$ is even and each of $H_1$ and $H_2$ has
  fewer edges than $H$. Hence they admit the sought weak signed
  circuit covers. Since $H_1$ and $H_2$ partition $H$ by
  Observation~\ref{obs:compress}, the covers of $H_1$ and $H_2$
  combine to form the required covers of $H$. This contradiction
  concludes the proof.
\end{proof}

\begin{corollary}\label{cor:eventree}
  Let $H$ be a tree of Eulerian graphs with $\negatx H$ even. Then $H$
  admits a weak signed circuit cover of length $4/3\cdot |E(H)|$.
\end{corollary}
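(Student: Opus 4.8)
The plan is to derive the corollary directly from Lemma~\ref{l:eventree} by a simple averaging argument over the three covers it produces. Since $\negatx H$ is even, Lemma~\ref{l:eventree} applies and yields three weak signed circuit covers $\CC_1$, $\CC_2$, $\CC_3$ of $H$ whose total width is at most $4$; that is, $\omega_{\CC_1,\CC_2,\CC_3}(e)\le 4$ for every edge $e$ of $H$.

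Next I would translate the width bound into a length bound by summing over edges. For each $i$, the length of $\CC_i$ equals $\sum_{e\in E(H)}\omega_{\CC_i}(e)$ by definition, so
\begin{equation*}
  \sum_{i=1}^3 \sum_{F\in\CC_i}\size{E(F)}
  = \sum_{e\in E(H)} \omega_{\CC_1,\CC_2,\CC_3}(e)
  \le 4\cdot\size{E(H)}.
\end{equation*}
Hence at least one of the three covers, say $\CC_j$, has length at most $\tfrac43\cdot\size{E(H)}$. Since $\CC_j$ is a weak signed circuit cover of $H$ by construction, it is the cover claimed in the statement.

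There is essentially no obstacle here: all the real work has been carried out in Lemma~\ref{l:eventree}, and the corollary is just the observation that the best of three covers of bounded total width is short. The only point to keep in mind is that we are producing a \emph{weak} signed circuit cover (bridges need not be covered), which is exactly what Lemma~\ref{l:eventree} supplies, so no extra argument for bridge edges is required.
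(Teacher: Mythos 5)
Your proof is correct and follows the argument the paper intends: the corollary is stated without proof precisely because it is an immediate averaging consequence of Lemma~\ref{l:eventree}, obtained exactly as you do by summing the three lengths as $\sum_{e\in E(H)}\omega_{\CC_1,\CC_2,\CC_3}(e)\le 4\,\size{E(H)}$ and taking the shortest of the three weak covers.
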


\begin{lemma}\label{l:gentree}
  Let $H$ be a tree of Eulerian graphs such that it has at least two
  leaf balloons, and each leaf balloon is odd. Then $H$ admits a
  signed circuit cover of width at most $2$ that covers every loop of
  $H$ exactly twice.
\end{lemma}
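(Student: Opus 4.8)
The plan is to argue by induction on $|E(H)|$, taking a minimal counterexample $H$ and whittling it down to a tractable core much as in the proof of Lemma~\ref{l:eventree}. First I would dispose of balanced loops: a balanced loop is itself a signed circuit, so I can put two copies of it in the cover, delete it, and recurse, after which $H$ has no balanced loops. Next I would reduce across bridges. If $e=uv$ is a bridge, let $H_u$ and $H_v$ be the components of $H-e$ containing $u$ and $v$, and attach a fresh negative loop $\ell_u$ at $u$ (resp.\ $\ell_v$ at $v$) to get $H_u^{+}$ and $H_v^{+}$. Each of these is a smaller tree of Eulerian graphs with at least two odd leaf balloons --- the new loop, plus at least one leaf balloon inherited from the nonempty $H_u$ (resp.\ $H_v$); the only danger, that $H_u$ is a single vertex, is ruled out because the corresponding end balloon of $H$ would then be an even leaf balloon. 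By minimality both $H_u^{+}$ and $H_v^{+}$ have the required cover; since a negative loop is covered only by barbells, $\ell_u$ lies in exactly two of them and likewise $\ell_v$, and splicing, for each of the two pairings, the barbell through $\ell_u$ to the barbell through $\ell_v$ across $e$ gives the required cover of $H$, a contradiction. Hence $H$ is bridgeless, so $H'=H$ and $H$ is a single balloon $B$ with $t$ attached negative loops, where (as every leaf balloon is odd) $t\ge2$, or $t=1$ and $B$ is odd; if $B$ is a single vertex, $H$ is a vertex with $t\ge2$ negative loops and any $2$-regular multigraph on the loops provides the desired cover via short barbells.

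Next I would observe that if $\negatx H$ (equivalently $\negat H$, since $H$ has no bridges) is even, then Lemma~\ref{l:eventree} already produces among its three weak covers a cover $\CC_1$ of width $\le2$ covering every negative loop --- hence, $H$ having no balanced loops, every loop --- exactly twice, and as $H$ is bridgeless this weak cover is a genuine signed circuit cover. This settles in particular the case $t=1$ (there $\negat H=\negat B+1$ is even). So from now on I may assume $\negat H$ odd and $t\ge2$.

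For the core I would compress the negative loops of $H$ through a function $f$ sending each loop to an incident non-loop edge, getting by Observation~\ref{obs:compress} a signed Eulerian graph $B^{*}$ with no negative loops; let $v$ be the foot of one loop, say $\ell_1$. If $B^{*}$, equivalently $B$, is a circuit, then $H$ is a tree of circuits with $u(H)\ge t\ge2$ and every leaf circuit (i.e.\ every loop) unbalanced, so Lemma~\ref{l:leaf} gives a cover of width $\le2$ covering the leaf circuits, in particular all loops, exactly twice. Otherwise $B^{*}$ decomposes --- by Lemma~\ref{l:rozklad} if it is $2$-connected, and by splitting off an end-block otherwise (every block of an Eulerian graph is again Eulerian) --- into non-trivial Eulerian graphs $B_1^{*}\ni v$ and $B_2^{*}$ with $\negat{B_2^{*}}$ even. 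Choosing $f$ so that $\ell_1$ lands in $B_1^{*}$ and every other loop lands in whichever side contains an edge at its foot (preferring $B_1^{*}$ when both do) and decompressing, Observation~\ref{obs:compress} yields an edge-partition $H=B_1\cup B_2$ with $\negat{B_2}$ even, $\negat{B_1}$ odd (so $B_1$'s balloon is odd), each $B_i$ smaller than $H$, and each loop on exactly one side. Then $B_1$, carrying $\ell_1$ and an odd balloon, is a tree of Eulerian graphs with at least two odd leaf balloons (its loops, together with its balloon when that is its only loop), so induction applies to $B_1$; and $B_2$ is an even Eulerian graph possibly carrying negative loops, covered by the cover $\CC_1$ of Lemma~\ref{l:eventree} when it carries no loop or an even number, and dealt with by repositioning the stray loop via a fresh choice of $v$ and $f$ otherwise. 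Combining the covers of the edge-disjoint $B_1$ and $B_2$ yields the desired cover of $H$, the final contradiction.

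The step I expect to be the main obstacle is the loop bookkeeping in this last paragraph: $f$ and the decomposing vertex $v$ must be chosen so that \emph{both} pieces come out tractable, each loop being re-routable only among the edges at its own foot, while one simultaneously controls the parity of the loop-free balloon on each side --- the parity accounting of Lemma~\ref{l:eventree} together with an extra ``$u\neq1$''-type constraint. A handful of small configurations must be dispatched by hand, notably a lone negative loop stranded on the even side next to a balanced balloon, or an unbalanced circuit splitting off as an end-block; in such cases the remedy is a careful, possibly iterated, choice of $v$ --- for instance taking $v$ to be the stray loop's foot, or pairing $\ell_1$ directly with the offending circuit into a barbell --- and working out these cases will, I expect, be the bulk of the proof. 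The base case where $B$ is a circuit is by contrast immediate, using only Lemma~\ref{l:leaf} and the existence of a $2$-regular multigraph on any set of at least two loops.
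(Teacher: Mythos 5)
Your framework (minimal counterexample, dispose of balanced loops, reduce across separations, then attack the core 2-connected case by compression and Lemma~\ref{l:rozklad}) matches the paper in outline, but there is a genuine gap precisely in the core, and a smaller one in the reduction step. First, the smaller one: reducing only across bridges leaves open the case where $H$ is bridgeless but has a cut-vertex (e.g.\ two unbalanced circuits sharing a vertex, plus loops). Then $B^{*}$ is not $2$-connected, Lemma~\ref{l:rozklad} does not apply, and your fallback of ``splitting off an end-block'' does not yield a piece with $\negat{B_2^{*}}$ even: a figure-eight with both circles odd has only odd end-blocks. The paper instead reduces at end-blocks and establishes full $2$-connectedness before compressing.

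The more serious gap is in the core case. You compress \emph{all} loops, split $B^{*}$ into $B_1^{*}\ni v$ with $\negat{B_1^{*}}$ odd and $B_2^{*}$ with $\negat{B_2^{*}}$ even, decompress, and assert that $B_1$ ``carries $\ell_1$ and an odd balloon'' so that induction applies. The parity works out the other way. Decompression preserves parity, so $\negat{B_1}$ is odd; if $\ell_1$ is the only loop landing in $B_1$, then $\negat{B_1^{*}}$ with its original signs equals $\negat{B_1}-1$, which is \emph{even}. So $B_1$'s non-loop balloon is an even leaf balloon and the inductive hypothesis fails. This is not a corner case you can patch by re-routing one stray loop: it is the generic outcome, and re-choosing $f$ changes the compressed signature and hence invalidates the decomposition you based $f$ on. You flag ``loop bookkeeping'' as the expected obstacle, but the issue is structural, not bookkeeping. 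The paper sidesteps it by a different device: it keeps \emph{two} loops $e_1,e_2$ uncompressed, so that the piece containing both feet $v_1,v_2$ automatically decompresses to a graph with at least two negative loops and induction applies without any parity fight. And when no useful decomposition of $H^{*}$ exists, the paper does not recurse at all: it shows (via a sequence of ``else contradiction'' reductions using Lemma~\ref{l:rozklad}) that every component of $H^{*}-P_1-P_2$ is an unbalanced circuit meeting each of two disjoint $v_1$--$v_2$-paths in exactly one vertex, and then \emph{directly} builds the cover as a chain of barbells along a closed trail through all loops and these circuits. That explicit construction is the heart of the argument and has no analogue in your plan; the recursion you propose cannot be made to close on its own.
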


\begin{proof}
  Let $H$ be a counterexample to the lemma with the minimum number of
  edges. Clearly, $H$ has no balanced loops.

  We prove that $H$ is $2$-connected. Suppose that this is not the
  case. Consider an end-block $H_1$ of $H$ incident with a cut-vertex
  $v$, and let $H_2$ be obtained from $H$ by removing all the vertices
  of $H_1$ except $v$.

  Assume that $H_1$ contains only one balloon and that balloon is even
  (hence $H_1$ has no loops). By Lemma~\ref{l:eventree}, $H_1$ admits
  a signed circuit cover of width at most $2$. Any leaf balloon of
  $H_2$ is either a leaf balloon of $H$, or it is obtained from a leaf
  balloon of $H$ by deleting $H_1$. By the assumption that
  $\negat{H_1}$ is even, every leaf balloon of $H_2$ is odd, and there
  are at least two of them. Since $H_2$ contains fewer edges than $H$,
  it admits the desired signed circuit cover and so does $H$, a
  contradiction.

  Thus, either $H_1$ is an odd balloon, or $H_1$ contains at least two
  balloons, at least one of which is a leaf balloon of $H$, and hence
  it is odd. We conclude that $H_1$ contains an odd balloon. By
  symmetry, $H_2$ contains an odd balloon. Note that in each of $H_1$
  and $H_2$, every leaf balloon (except possibly for one containing
  $v$) is a leaf balloon of $H$. For $i\in\{1,2\}$, we define $H'_i$ as the
  graph obtained from $H_i$ by adding a negative loop $e_i$ at
  $v$. For $i\in\{1,2\}$, each leaf balloon of $H'_i$ is odd and there are
  at least two of them. Since $H_1$ and $H_2$ contain fewer edges than
  $H$, they admit signed circuit covers described in the
  statement. Note that $e_1$ and $e_2$ must be covered by barbells,
  and each of them is covered exactly twice. {{To obtain the sought signed circuit
  cover of $H$, we combine the found signed circuit covers of $H_1$ and $H_2$ by merging barbells
  containing loops $e_1$ and $e_2$. This contradicts the choice of $H$.}}

  Thus, $H$ is 2-connected. That is, either $H$ is a vertex with loops
  incident with it, or $H$ consists of an edge and loops incident with
  end-vertices of the edge, or $H$ is bridgeless (on at least two
  vertices). In the first two cases, it is easy to find the desired
  signed circuit cover directly. In the third case, $H$ consists of
  one non-loop balloon and possibly {{unbalanced}} loops incident with vertices of
  the balloon. As $H$ has at least two leaf balloons, it contains at
  least one loop. If $H$ has exactly one negative loop, then the
  non-loop balloon is odd and the cover $\CC_1$ from
  Lemma~\ref{l:eventree} provides a contradiction. Thus, $H$ has at
  least two loops. Denote by $e_1$ and $e_2$ two arbitrary distinct
  loops of $H$, and by $v_1$ and $v_2$ the end-vertices of $e_1$ and
  $e_2$, respectively; it may happen that $v_1=v_2$.

  Let $H^*$ be a compression of $H-\{e_1,e_2\}$ with respect to an
  arbitrary assignment $f$. If we partition $H^*$ into Eulerian
  subgraphs in such a way that one of them, denoted by $H^*_S$,
  contains $v_1$ and $v_2$ and {{every other either contains
   even number of negative edges or its decompression contains}} at least two loops, then we can produce
  a cover of $H$ as follows.  We take the decompression of all
  subgraphs. The union of the decompression of $H^*_S$ with $e_1$ and
  $e_2$, denoted by $H_S$, has fewer edges than $H$ and as $H$ is a
  smallest counterexample to the lemma, $H_S$ can be covered so that
  the conditions of the lemma are satisfied.  The decompressions of
  other subgraphs either satisfy assumptions of Lemma~\ref{l:eventree}
  (those subgraphs that have even number of negative edges), or they
  have at least two loops and satisfy assumptions of this lemma while
  having fewer edges than $H$. Thus we can cover them as required. A
  cover of $H$ is then obtained as the union of the covers of the
  subgraphs under consideration. Therefore, finding such a partition
  of $H^*$ provides a contradiction with the choice of $H$.

  If $H^*$ is a circuit, then Lemma~\ref{l:leaf} provides the desired
  cover. If $v_1=v_2$, then by Lemma~\ref{l:rozklad}, $H^*$ can be
  partitioned into Eulerian subgraphs $H_S^*$ and $H_2^*$, such that
  $H_S^*$ contains $v_1$ and $\negat{H_2^*}$ is even, a
  contradiction. Thus, $v_1\neq v_2$.

  Since $H^*$ is 2-connected, there exist two internally
  vertex-disjoint $v_1$-$v_2$-paths, $P_1$ and $P_2$. As $H^*$ is not
  a circuit, $H^*-P_1-P_2$ is non-empty. Let $A$ be a component of
  $H^*- P_1-P_2$. Assume that $A$ contains a cut-vertex $r$ and let $A_1$ and $A_2$ be subgraphs of $A$ such that $A_1\cup A_2=A$ and $A_1\cap A_2=\{r\}$. Since $H^*$ is 2-connected, both $A_1$ and $A_2$ intersect $H^*-A$. If one of $A_1$ and $A_2$, say $A_1$, is even, then $H^*$ can be decomposed into $A_1$ and $H^*-A_1$ which leads to a contradiction. Thus both $A_1$ and $A_2$ are odd which implies that $A$ is even and so $H^*$ can be decomposed into $A$ and $H^*-A$ which are smaller graphs that fulfill the conditions of the lemma, a contradiction. Therefore we can assume that $A$ is 2-connected.  

  Let $w \in V(A) \cap V(P_1 \cup P_2)$.  By
  Lemma~\ref{l:rozklad}, either $A$ is a circuit, or it can be
  decomposed into Eulerian subgraphs $H^*_1$ and $H^*_2$ such that
  $H^*_1$ contains $w$ and $\negat{H^*_2}$ is even. In the latter
  case, the decomposition of $H^*$ into $H^*_S=H^*-H^*_2$ and $H^*_2$
  provides a contradiction. Therefore $A$ is a circuit. If $A$ is
  balanced, then we get a contradiction by decomposing $H^*$ into
  $H^*_S=H^*-A$ and $A$. Hence, $A$ is an unbalanced circuit.

{If $A$ intersects $P_1$ in more than one vertex, say $A$ intersects
  $P_1$ in distinct vertices $u_1, u_2$, then we can decompose $H^*$
  as follows.  One subgraph, denoted by $B$, is a union of a subpath $P_1'$
  of $P_1$ between $u_1$ and $u_2$, and of every component of $H^*-P_1-P_2$ that intersects $P_1'$ and does not intersect $(P_1\cup P_2)-P_1'$, and of a $u_1$-$u_2$ path of $A$
  chosen in such a way that $B$ has even number of negative edges
  (since $A$ is unbalanced, this is possible). The second subgraph is
  defined as $H^*_S = H^*-E(B)$.  Again, we find a contradictory
  decomposition of $H^*$. Thus $A$ intersects $P_1$ in at most one
  vertex. Similarly, $A$ intersects $P_2$ in at most one vertex. As
  $H^*$ is $2$-connected, $A$ intersects both $P_1$ and $P_2$ in
  exactly one vertex (and these vertices are distinct).}
  
  Now we are ready to define the cover of $H$.  First, we define a set
  of unbalanced circuits $S$ and a closed trail $T$ such that
  $S\cup T=H$. Let $L$ be a set of loops whose intersection with
  $V(P_1\cup P_2)$ is nonempty.  The set $S$ contains all loops of $H$
  and all components of $H-P_1-P_2-L$ that are unbalanced
  circuits. Note that any two unbalanced circuits of $S$ that are not
  loops are vertex-disjoint, since they arise from two different
  components of $H^*-P_1-P_2$. The closed trail $T$ starts at $v_1$
  and traverses every edge of $H$ that is not contained in any circuit
  from $S$ in such a way that edges of $P_2$ are the last edges of the
  trail.  We label the elements of $S$ according to the order in which
  $T$ visits them for the first time, starting with the unbalanced
  loop $e_1$. Thus we obtain an ordered $n$-tuple of unbalanced
  circuits $(C_1=e_1, C_2, \dots, C_n)$. For $C_i$, define $u_i$ to be
  the first vertex of $C_i$ visited by $T$. We define barbells
  covering $H$ as follows.  Take $C_i$, $C_{i+1}$ and the segment of
  $T$ between $u_i$ and $u_{i+1}$, for $i \in \{1, 2, \dots,
  n-1\}$. Moreover, take the barbell containing $C_n$, $C_1$ and the
  segment of $T$ after $u_n$. Next, we will prove that the constructed
  elements are, indeed, barbells.

{{Note that between any two occurrences of the same vertex on the
  trail $T$ there is a vertex that is incident to a loop. }}
  Thus $u_i$-$u_{i+1}$
  subtrails of $T$ contain no circuits, for
  $i \in \{1, 2, \dots, n\}$. If two unbalanced circuits, say $C_i$
  and $C_j$, intersect, then due to the definition of $S$ at least one
  of them is a loop, for $i,j\in\{1,2,\ldots,n\}$. Moreover, the trail
  segments do not internally intersect the unbalanced circuits,
  because the components of $H-P_1-P_2$ intersect $P_1$ only in one
  vertex and the segments of $T$ containing edges of $P_1$ and $P_2$
  are separated by the loop $e_2$. Therefore the constructed cover
  consists of barbells.  It covers the edges of circuits in $S$, which
  contains all loops, exactly twice, while the edges in $T$ are
  covered once. Thus the cover satisfies the statement of the lemma
  which contradicts the fact that $H$ is a counterexample.
\end{proof}

%%%%%%%%%%%%%%%%%%%%%%%%%%%%%%%%%%%%%%%%%%%%%%%%%%%%%%%%%%%%%%%%%%%%%%
% ========================== SECTION 3 PROOF OF THM ================================

\section{Proof of Theorem~\ref{thm:main}}\label{sec:proof}

In this section, we follow ideas of~\cite{CLLZ} to provide the
framework for the proof of Theorem~\ref{thm:main}. We will show a
straightforward proof of the bound $4\cdot |E(G)|-5/3\cdot \epsilon_N$
instead of $11/3 \cdot |E(G)| - 5/3\cdot \epsilon_N$. Further, we will
analyse the straightforward proof. We identify the places that require
an improvement in order to prove Theorem~\ref{thm:main}, and postpone
the technical details to Section~\ref{sec:BC}. The following is an
easy observation.

\begin{lemma}\label{l:minsig}
  Let $\sigma$ be a minimum signature of $(G,\sigma)$. Then for every
  edge-cut of $(G,\sigma)$ the number of negative edges does not
  exceed the number of positive ones.
\end{lemma}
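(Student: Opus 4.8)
The plan is to argue by contradiction using the switching operation on signed graphs. Recall that for a vertex subset $U \subseteq V(G)$, \emph{switching at $U$} replaces $\sigma$ by the signature $\sigma'$ that agrees with $\sigma$ on every edge with both ends in $U$ or both ends outside $U$ and negates $\sigma$ on every edge of the cut $\partial(U)$ (the set of edges with exactly one end in $U$). Every edge-cut of $G$ is of the form $\partial(U)$ for a suitable $U$, so it suffices to treat cuts of this shape.

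First I would check that switching produces an equivalent signed graph. Any circuit $C$ of $G$ crosses the cut $\partial(U)$ an even number of times, hence $|E(C) \cap \partial(U)|$ is even; since switching changes the sign of exactly those edges, the parity of the number of negative edges of $C$ is unchanged, so $C$ is balanced with respect to $\sigma$ if and only if it is balanced with respect to $\sigma'$. Thus $(G,\sigma)$ and $(G,\sigma')$ have the same set of balanced circuits and are equivalent.

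Now suppose, for contradiction, that some edge-cut $\partial(U)$ of $(G,\sigma)$ contains more negative than positive edges; write $n$ for the number of negative and $p$ for the number of positive edges of $\partial(U)$, so $n > p$. Switching at $U$ turns the $p$ positive cut edges negative and the $n$ negative cut edges positive, while leaving all other edges untouched; hence the total number of negative edges changes by $p - n < 0$, i.e.\ it strictly decreases. The resulting signature $\sigma'$ is equivalent to $\sigma$ by the previous paragraph, contradicting the assumption that $\sigma$ is a minimum signature.

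I expect no serious obstacle here; the only point requiring care is the (standard) verification that switching preserves the set of balanced circuits, which rests on the elementary parity fact that a circuit meets any edge-cut in an even number of edges. Loops cause no trouble, since a loop belongs to no edge-cut.
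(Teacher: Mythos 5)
Your proof is correct and is exactly the argument the authors have in mind: the paper states Lemma~\ref{l:minsig} as ``an easy observation'' with no proof, and the standard switching argument you give (switch at $U$, note that circuits cross $\partial(U)$ an even number of times so the set of balanced circuits is preserved, and observe that the negative-edge count would strictly drop if the cut had more negatives than positives) is surely the intended one. One small caveat worth keeping in mind: the conclusion holds for edge-cuts in the sense of $\partial(U)$ (equivalently, for minimal disconnecting sets, and more generally for any set of the form $\partial(U)$), not for arbitrary non-minimal disconnecting edge sets; your opening sentence implicitly adopts the $\partial(U)$ convention, which matches both standard usage and the way the lemma is applied in the paper (to bridges and to minimal $2$-edge-cuts).
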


Given a set $\mathcal E$ of vertex-disjoint Eulerian subgraphs of a
signed graph $(G,\sigma)$, \emph{connecting $\mathcal E$ into a tree
  of Eulerian graphs} means taking the disjoint union of the graphs in
$\mathcal E$ and adding the minimum number of edges of $G$ needed to get a tree of Eulerian graphs.

We are going to prove Theorem~\ref{thm:main}. Thus, let $(G,\sigma)$ be a
flow-admissible signed graph; without loss of generality, we assume
that it is connected and that $\sigma$ is a minimum signature. Let $X$
be the set of negative edges of $(G,\sigma)$. If $X=\emptyset$, the
result follows by Theorem~\ref{BJJ}. Furthermore, we may assume that
$|X| \geq 2$ for if $|X|=1$, then $(G,\sigma)$ is not
flow-admissible.

Let $B$ be the set of such edges $b$ of $(G,\sigma)$ that $G-b$ has
two components, each of them being unbalanced. Note that since
$(G,\sigma)$ is flow-admissible, $B$ is the set of all bridges of
$G$. Moreover, as $\sigma$ is minimum, by Lemma~\ref{l:minsig}, we
have $B\cap X=\emptyset$. Let $S$ be the set of such edges $s$ of
$(G,\sigma)$ that there exists a 2-edge-cut $\{s,t\}$, where $t\in
X$. Note that by Lemma~\ref{l:minsig} and the minimality of $\sigma$,
$S\cap X=\emptyset$. Furthermore, $S$ is exactly the set of bridges in
$G-X-B$. This implies the following claim.

\begin{claim}\label{cl:1}
  $G-X-B-S$ is a bridgeless balanced graph.\qed
\end{claim}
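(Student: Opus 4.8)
The statement packages two independent facts, and I would verify each in turn. For \emph{balancedness}, recall that $X$ is by definition the set of all negative edges of $(G,\sigma)$; hence $G-X-B-S$ is a subgraph of $G-X$, which contains no negative edge at all, and a signed graph with no negative edges is balanced. So this part requires nothing beyond unwinding the definitions.

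For \emph{bridgelessness}, I would invoke the assertion established in the paragraph preceding the claim, namely that $S$ is \emph{exactly} the set of bridges of $G-X-B$. Write $G'=G-X-B$. The key general observation is that deleting from any graph its entire set of bridges leaves a bridgeless graph. To see this, let $e$ be an edge of $G'$ that is not a bridge of $G'$; then $e$ lies on some circuit $C$ of $G'$, and since no edge on a circuit is a bridge, $C$ avoids the bridge set $S$, so $C$ survives in $G'-S$ and witnesses that $e$ is not a bridge of $G'-S$ either. As every edge remaining in $G'-S$ is a non-bridge edge of $G'$, the graph $G'-S=G-X-B-S$ has no bridge.

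The only point to keep an eye on is the convention (fixed in Section~\ref{sec:proof}'s preamble, i.e.\ Section~2) that subtraction of subgraphs also deletes the resulting isolated vertices; but deleting isolated vertices alters neither the set of bridges nor the balance of a signed graph, and an edgeless graph is vacuously bridgeless and balanced, so this causes no trouble. I do not expect any genuine obstacle: the claim is a one-line consequence of the two facts recorded just above it ($B$ is the set of all bridges of $G$, and $S$ is the set of all bridges of $G-X-B$) together with the elementary fact about removing bridges.
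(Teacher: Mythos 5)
Your proposal is correct and matches the paper's intended argument: the paper leaves Claim~\ref{cl:1} as an immediate consequence of the facts recorded just before it ($X$ is the set of all negative edges, and $S$ is exactly the set of bridges of $G-X-B$), and your write-up simply spells out that reasoning, including the standard fact that deleting all bridges of a graph leaves a bridgeless graph. No gap to report.
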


By Claim~\ref{cl:1} and Theorem~\ref{BJJ}, $G-X-B-S$ has a signed circuit cover
of length at most $5/3\cdot |E(G-X-B-S)|$. We will construct a
collection of signed circuits $\mathcal{C}$ that covers the edges of
$X\cup B\cup S$ with total length at most $2\cdot |E(G)|$. This will
prove Theorem~\ref{thm:main}.

Let $T$ be a spanning tree of the connected graph $G-X$, and we denote
$T\cup X$ by $(G',\sigma')$, where $\sigma'$ is the restriction of
$\sigma$ to $G'$. Note that $\sigma'$ may not be a minimum signature
of $(G',\sigma')$. Let $X', B', S'$ be defined on $(G',\sigma')$ in
the same way as $X,B,S$ on $(G,\sigma)$, respectively.

\begin{claim}\label{cl:2}
  $X'=X$, $B'\supseteq B$ and $S'\supseteq S$.\qed
\end{claim}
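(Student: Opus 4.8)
We need to show $X'=X$, $B'\supseteq B$, and $S'\supseteq S$, where $(G',\sigma')=T\cup X$ with $T$ a spanning tree of $G-X$.

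The plan is to take the three parts in turn, using the fact that $T$ is a spanning tree of $G-X$, so $G'$ and $G$ have the same vertex set, the same negative edges, and $E(G')\subseteq E(G)$.

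For $X'=X$: the set $X$ (resp.\ $X'$) is by definition the set of negative edges of $(G,\sigma)$ (resp.\ of $(G',\sigma')$). Since $\sigma'$ is just the restriction of $\sigma$ to $G'=T\cup X$, and all of $X$ lies in $G'$ while none of the edges of $T$ is negative (as $T\subseteq G-X$), the negative edges of $(G',\sigma')$ are exactly $X$. So $X'=X$.

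For $B'\supseteq B$: let $b\in B$, so $b$ is a bridge of $G$ separating it into two unbalanced components. First, $b\notin X$, hence $b\in T$ (since $T$ is a spanning tree of $G-X$, which contains every bridge of $G-X$, and a bridge of $G$ not in $X$ is a bridge of $G-X$), so $b\in E(G')$. Now I claim $b$ is a bridge of $G'$ with both sides unbalanced. Indeed $G-b$ has two components $D_1,D_2$; since $T-b$ has two components $T_1\subseteq D_1$ and $T_2\subseteq D_2$ spanning $V(D_1),V(D_2)$ respectively, and every negative edge of $G$ (being in $X$) stays in $G'$, the two components of $G'-b$ are $T_i\cup (X\cap E(D_i))$ for $i=1,2$. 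It remains to check each is unbalanced. Here I would argue that a component $D_i$ of $G-b$ is unbalanced iff it contains an odd number of negative edges in the minimum signature $\sigma$ — wait, that is not literally the definition of unbalanced, but since $\sigma$ restricted to $D_i$ cannot be switched to all-positive (as $D_i$ is unbalanced and switching classes are preserved under restriction to a connected subgraph), $D_i$ contains an unbalanced circuit, and such a circuit lies in $D_i$. The subtle point is whether that unbalanced circuit survives into $G'$; it need not, since $T$ may omit many edges of $D_i$. So instead I use switching: $D_i$ is balanced iff $\sigma$ can be switched on $D_i$ so that every edge becomes positive; equivalently, iff there is a partition of $V(D_i)$ with the negative edges of $D_i$ exactly the crossing edges. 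Restricting such a partition to $G'$-side shows: if the $G'$-side component $T_i\cup(X\cap E(D_i))$ were balanced via a vertex partition, that same partition would need to cut exactly the negative edges — but the negative edges on the $G'$ side are the same set $X\cap E(D_i)$ as on the $G$ side, while $T_i$ spans $V(D_i)$ and has no negative edges, so the partition must put the two endpoints of each tree edge on the same side, forcing the whole of $V(D_i)$ onto one side, forcing no negative edges, i.e.\ $X\cap E(D_i)=\emptyset$, and then $D_i$ itself would be balanced, a contradiction. Hence the $G'$-side is unbalanced, so $b\in B'$.

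For $S'\supseteq S$: let $s\in S$, so there is a $2$-edge-cut $\{s,t\}$ of $G$ with $t\in X$. Then $s\notin X$ (as argued via Lemma~\ref{l:minsig} in the text), so $s\in T\subseteq E(G')$, and also $t\in X\subseteq E(G')$. The cut $\{s,t\}$ partitions $V(G)=V(G')$ into two parts $U_1,U_2$ with exactly $s$ and $t$ crossing. In $G'$, the edges crossing this partition are among those crossing in $G$, so only $s$ and $t$ cross; moreover $s$ and $t$ do cross (they are present in $G'$), and neither alone can be a cut (that would make it a bridge, but each side of $\{s,t\}$ in $G$, hence in $G'$ via the spanning-tree argument as above, contains enough structure... actually more simply: if $\{s\}$ were an edge-cut of $G'$ then $s$ would be a bridge of $G-X$, contradicting $s\in T$ being... no—). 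Let me instead just observe that $\{s,t\}$ is an edge-cut of $G'$ of size $\le 2$ containing the negative edge $t$; if it has size $1$ then $s=t$, impossible since $t\in X$, $s\notin X$; if the cut degenerates because one of $U_1,U_2$ is empty, that cannot happen as the $G$-partition is nontrivial and $V(G)=V(G')$. So $\{s,t\}$ is a genuine $2$-edge-cut of $G'$ with $t\in X'=X$, giving $s\in S'$.

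**Main obstacle.** The delicate step is the unbalancedness claim in the $B'\supseteq B$ part (and implicitly that the cuts remain ``non-degenerate'' in $G'$): one must be careful that restricting to the spanning tree $T$ could in principle balance a previously unbalanced side. The resolution is the switching/vertex-partition argument above, which works precisely because $T$ spans all of $V(D_i)$ and carries no negative edges, so any balancing partition of the $G'$-side is forced to be trivial and would balance $D_i$ itself. I expect the write-up to hinge on stating this switching characterization of balance cleanly.
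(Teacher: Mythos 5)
The paper declares Claim~2 straightforward and provides no proof, so you are filling in details the authors chose to omit. Your write-up is correct in substance, and the switching/vertex-partition argument you give for the unbalancedness of both sides of $G'-b$ is exactly the right (and the only non-trivial) observation: because $T_i$ spans $V(D_i)$ and consists entirely of positive edges, any balancing bipartition of the $G'$-side must be trivial, forcing $X\cap E(D_i)=\emptyset$ and hence $D_i$ balanced, a contradiction. The identifications $X'=X$, and that the components of $G'-b$ are $T_i\cup(X\cap E(D_i))$, are all handled correctly.

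There is one small gap worth flagging. In the $S'\supseteq S$ part you write ``$s\notin X$\dots, so $s\in T$.'' That inference does not follow: an edge of $G-X$ need not lie in the chosen spanning tree $T$ of $G-X$. What is actually true, and what you need, is that $s$ is a \emph{bridge} of $G-X$: since $\{s,t\}$ is a $2$-edge-cut of $G$ with $t\in X$, the edge $s$ is the only edge of $G-X$ crossing the corresponding vertex bipartition, so $G-X-s$ is disconnected; since $G-X$ is connected (by Lemma~\ref{l:minsig}, every edge-cut of $G$ has at least as many positive edges as negative ones, so no cut is entirely contained in $X$), $s$ is a bridge of $G-X$ and therefore lies in every spanning tree of $G-X$, in particular in $T$. (The analogous point for $b\in B$ you do justify correctly, via ``a bridge of $G$ not in $X$ is a bridge of $G-X$.'') With this one-line repair the argument is complete.
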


This straightforward claim guarantees that it is enough to find a
collection of signed circuits of $(G',\sigma')$ that covers
$X'\cup B' \cup S'$. To finish the proof of Theorem~\ref{thm:main}, we
will prove the following lemma.

\begin{lemma}\label{l:main}
  Let $(G',\sigma')$ be a signed graph such that $G'-X'$ is a spanning
  tree of $G'$, where $X'$ denotes the set of negative edges of
  $G'$. Let $B'$ be the set of bridges of $G'$ which separate two unbalanced subgraphs, and let $S'$ be the
  set of such edges of $(G',\sigma')$ that there exists a $2$-edge-cut $\{s',t'\}$, where $t'\in X'$.
  If $|X'| \ge 2$, then there exists a
  collection of signed circuits $\mathcal{C'}$ of $(G',\sigma')$ that
  covers the edges of $X'\cup B'\cup S'$ with total length at most
  $2\cdot |E(G')|$. Moreover, $\mathcal{C'}$ covers every negative
  loop of $G'$ exactly twice.
\end{lemma}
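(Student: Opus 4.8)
The plan is to exploit the structure of $(G',\sigma')$, namely that $G'-X'$ is a spanning tree $T$, so that $G'$ is a very sparse graph: every edge not in $X'$ is a bridge of $T$, and adding the negative edges $X'$ creates exactly $|X'|$ independent cycles. First I would observe that each negative edge $x\in X'$ together with the unique path in $T$ joining its endpoints forms a unique circuit $C_x$; this circuit is unbalanced (it contains exactly one negative edge), and $\{C_x : x\in X'\}$ is a kind of fundamental cycle basis. I want to cover $X'\cup B'\cup S'$ by barbells and balanced circuits built out of these objects. The key point is to connect the unbalanced circuits $C_x$ (and the isolated negative loops among them) into a suitable tree of Eulerian graphs — really a tree of circuits — inside $G'$, so that we can invoke the covering lemmas from Section~2.

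The main construction step is as follows. Let me form an auxiliary graph $H$ consisting of the union of all the fundamental circuits $C_x$ for $x\in X'$, plus the bridges of $G'$ in $B'$ and the edges in $S'$ that are needed to keep things connected — intuitively, contract/trim $T$ down to the Steiner-type subtree spanning the negative edges and the relevant bridges. Since each leaf balloon of this configuration will be an unbalanced $C_x$ (a leaf must "end" in a negative edge, as $T$ alone is balanced), and $|X'|\ge 2$ guarantees at least two leaf balloons, $H$ is a tree of circuits with every leaf circuit unbalanced. Now I would apply Lemma~\ref{l:43} (or Lemma~\ref{l:gentree}, after making leaf balloons odd) to get weak signed circuit covers of $H$ of bounded total width, turning these into genuine signed circuit covers; the pieces of $T$ that are bridges of $G'$ (elements of $B'$) are covered because they lie on some $C_x$ or are handled as bridges by the lemma, and the $S'$ edges are covered likewise. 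The length bound $2\cdot|E(G')|$ then comes from the fact that the total width over the covering collection is at most (essentially) $2$ on the edges we must cover and the edges of $T$ are used with controlled multiplicity; since $|E(G')|=|V(G')|-1+|X'|$ and $T$ has $|V(G')|-1$ edges, a width-$\le 2$ argument on $X'\cup B'\cup S'$ together with width-$1$ usage elsewhere keeps the total under $2\cdot|E(G')|$.

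The subtle part is the bookkeeping that every edge of $B'\cup S'$ actually lies in the subtree of $T$ that we retained, and that the resulting $H$ really is a tree of circuits (no Eulerian blocks of higher complexity arise, since $T$ is a tree); here I would use Lemma~\ref{l:minsig}-type reasoning adapted to $(G',\sigma')$ — more precisely the definitions of $B'$ and $S'$ — to argue that an edge in $B'$ separating two unbalanced parts must have an unbalanced, hence $C_x$-containing, component on each side, so it lies between two balloons, and similarly for $S'$ via its $2$-edge-cut partner in $X'$. The "moreover" clause about negative loops being covered exactly twice is then immediate from the corresponding clause in Lemma~\ref{l:43}/Lemma~\ref{l:gentree}, since a negative loop of $G'$ is itself an element of $X'$ and becomes a negative-loop balloon of $H$.

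The main obstacle I anticipate is getting the length constant exactly $2$ rather than something slightly larger: the covering lemmas of Section~2 give total width $\le 4$ over \emph{three} covers, i.e.\ average width $4/3$, which would naively yield $\approx (4/3)|E(H)|$, comfortably below $2|E(G')|$ on the covered part — but one must be careful that edges of $T$ not on any $C_x$ and not in $B'\cup S'$ need not be covered at all (they are exactly the deletable part), so the honest estimate is length $\le \frac{4}{3}|E(H)| \le \frac{4}{3}\bigl(|X'|+|B'|+|S'|+(\text{path edges})\bigr)$, and one checks this is at most $2\cdot|E(G')|$ using $|E(G')| = |E(T)| + |X'| \ge |X'| + (\text{all of }B'\cup S')$. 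Making this inequality airtight — in particular handling the degenerate cases where $H$ collapses (e.g.\ $|X'|=2$ with the two negative edges sharing the tree path, giving a single barbell or short barbell) — is where the real care is needed, and I would treat those small cases by hand exactly as the Section~2 lemmas treat their base cases.
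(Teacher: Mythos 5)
Your high-level instinct — use the fundamental circuits $C_x$, package them as a tree of Eulerian graphs, and invoke the Section~2 covering lemmas — is correct as far as it goes, but the proposal has several gaps that are exactly the places where the real work of the paper lies.

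First, the object you want to apply the lemmas to cannot be the \emph{union} $\bigcup_{x\in X'}C_x$. If two fundamental circuits share edges, their union has vertices of odd degree (two degree-$3$ branch vertices at the ends of the shared path), so it is not a cycle and not a tree of Eulerian graphs. What the paper uses is the \emph{symmetric difference} $C_{X'}$, which is an even-degree subgraph, and then it \emph{connects} its components into a tree of Eulerian graphs $H$ by adding bridges. Even then, the components are Eulerian graphs, not circuits, so Lemma~\ref{l:43} (which needs a tree of circuits) is not directly available; you need Lemma~\ref{l:eventree} or Lemma~\ref{l:gentree}. Moreover, your claim that ``each leaf balloon will be an unbalanced $C_x$'' is false in general: already for $|X'|=2$ with $C_x\cap C_y$ a non-trivial path, $C_{\{x,y\}}$ is a single \emph{balanced} circuit, so $H$ has a balanced leaf balloon.

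Second, you do not address parity, which is the crux of the argument. Lemma~\ref{l:eventree} requires $\negatx H$ even, and when $|X'|$ is odd that hypothesis fails for $H$ built from $C_{X'}$. The paper deals with this by taking $C_{X'-\{a\}}$, applying Corollary~\ref{cor:eventree} to get a $\tfrac{4}{3}|E(H)|$ cover of the even part, and then adding one extra signed circuit $C_2$ containing $C_{\{a,b\}}$. This naively gives only $(2+\tfrac13)|E(G')|$, and pushing this down to $2|E(G')|$ is precisely the content of Section~4 (Cases~B and~C), which is a substantial fraction of the paper: you must choose $a$, $b$ (and, in Case~B, which of the three covers from Lemma~\ref{l:eventree}) so that the extra circuit is short enough, or exhibit explicit covers in a small number of exceptional configurations. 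The proposal does not engage with this at all.

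Third, the paper's proof begins by reducing to the case that $G'$ is $2$-connected, via an end-block argument that introduces auxiliary negative loops at the cut-vertex and merges the resulting barbells. This reduction is what makes $B'=\emptyset$ and keeps the later length bookkeeping tight (the $2\cdot|E(G'_1)|+2\cdot|E(G'_2)|=2\cdot|E(G')|$ computation depends on the loops being covered exactly twice). Skipping this step leaves you with bridges to worry about and no clean way to recurse. Finally, the ``moreover'' clause about negative loops being covered exactly twice is not automatic: in Case~B the paper has to track carefully whether the single loop $\ell$ has already been covered once or twice before deciding how to form $C_2$; citing Lemma~\ref{l:43}/\ref{l:gentree} in passing does not suffice.
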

\begin{proof}
  For the sake of a contradiction, suppose that $(G',\sigma')$ is a
  counterexample with the minimum number of edges. Clearly,
  $(G',\sigma')$ has no balanced loops. {{Also $(G',\sigma')$
  has no vertex of degree 1 as the incident bridge does not belong to $X'\cup B'\cup S'$.}}

  We prove that $G'$ is $2$-connected. Suppose it is not. Let $G'_1$
  be an end-block of $G'$ incident with a cut-vertex $v$, and let
  $G'_2$ be obtained by removing all vertices of $G'_1$ except
  $v$. Each of these graphs inherits a signature from $G'$ that will
  be omitted from the notation. Since $G'$ contains no vertex of
  degree 1, both $G'_1 - T$ and $G'_2-T$ are non-empty and thus $G'_1$ and $G'_2$ are unbalanced.  For
  $i\in\{1,2\}$, we add a negative loop $e_i$ to $G'_i$ at $v$ and denote
  the resulting graph by $G''_i$. The signed graphs $G''_1$ and
  $G''_2$ satisfy the conditions of the lemma, because they have fewer
  edges than $G'$ and each of them has at least two negative
  edges. Note that $e_1$ and $e_2$ must be covered by barbells, and
  that each of them is covered exactly twice. {{To obtain the sought collection of signed circuits for $(G',\sigma')$, we combine the found signed circuit covers of $G''_1$ and $G''_2$ by merging the barbells
  containing loops $e_1$ and $e_2$. }}
   Taking into account that the
  $e_1$ and $e_2$ are covered exactly twice, we can bound the length
  of the cover of $(G',\sigma')$ by
  $$
  2 \cdot |E(G''_1)|-2+2 \cdot |E(G''_2)|-2=2 \cdot |E(G'_1)|+2 \cdot
  |E(G'_2)|=2 \cdot |E(G')|,
  $$
  a contradiction.

  Thus, $G'$ is 2-connected as claimed. This means that $G'$ is either
  a vertex with loops incident with it, or an edge with loops incident
  to its end-vertices, or a bridgeless graph with at least two
  vertices. In the first two cases, the required collections are easy
  to find directly. It follows that $G'$ is bridgeless (with
  $\size{V(G')}\geq 2$) and hence $B'=\emptyset$, so it suffices to
  cover the edges of $X'\cup S'$.

  For $x\in X'$, let $C_x$ be the unique circuit of $T\cup x$. For
  $A\subseteq X'$, let $C_A$ be the symmetric difference of all
  circuits $C_a$ for $a\in A$. Given $x\in X'$, let $S'_x$ be the set
  of such edges $s$ of $S'$ that belong to a 2-edge-cut $\{s,x\}$ of
  $(G',\sigma')$. For $A\subseteq X'$, let $S'_A$ be the union of
  $S'_a$ over all $a\in A$. Since $X'$ contains no $2$-edge-cut,
  $S'_x\cap S'_y=\emptyset$ for $x\neq y$.

  \begin{claim}\label{cl:3}
    For $A\subseteq X'$, $C_{A}$ contains every edge of
    $A\cup S'_A$.\qed
  \end{claim}

  Since the intersection of two paths in a tree is either a path
  (possibly trivial) or the empty graph, the following holds.

  \begin{claim}\label{cl:4}
    If $|A|=2$, then $C_A$ is either a balanced circuit, or a short
    barbell, or a union of two vertex-disjoint unbalanced circuits.\qed
  \end{claim}

  We will distinguish three cases based on the number of loops of $G'$.

  \begin{xcase}{Case A}
    The graph $G'$ contains at least two loops.
  \end{xcase}

  Consider $C_{X'}$ and note that it contains all loops of $G'$ as
  well as $X'\cup S'$ according to Claim 3. We connect $C_{X'}$
  into a tree of Eulerian graphs $H$. If $H$ contains an even leaf
  balloon $F$, then by Corollary~\ref{cor:eventree}, $F$ can be
  covered by a signed circuit cover of total length at most
  $4/3\cdot |E(F)|$. Thus we may consider the signed graph $H-E(F)$,  which has fewer edges than $H$ (and at least two negative loops). {{We repeat this process until we obtain a signed graph $H^*$ that does not contain any even leaf
  balloon. By Lemma~\ref{l:gentree}, $H^*$ admits a signed circuit cover
  $\mathcal{C}$ of length at most $2\cdot |E(H^*)|$ such that every loop
  of $H^*$ is covered exactly twice. The combination of this cover with the ones found for even leaf balloons provides a signed circuit cover of $G'$ that satisfies the lemma, which is a contradiction with the choice of $G'$.}}

  \begin{xcase}{Case B}
    The graph $G'$ contains exactly one loop.
  \end{xcase}

  Let $\ell\in X'$ be the unique loop of $G'$. Suppose first that
  $|X'|$ is even. Connecting $C_{X'}$ into a tree of Eulerian graphs
  $H$ and applying Lemma~\ref{l:eventree}, we find a collection of
  signed circuits of total length at most
  $2\cdot |E(H)|$ covering $C_{X'}$ such that $\ell$ is covered twice. By Claim~\ref{cl:3},
  this cover of $X'\cup S'$ provides a counterexample.

  Thus, $|X'|$ is odd. Let $a$ and $b$ be distinct negative edges
  different from $\ell$. Consider $C_{X'-\{a\}}$, which has an even
  number of negative edges, and connect it into a tree of Eulerian
  graphs $H$. By Corollary~\ref{cor:eventree}, $H$ admits a collection of
  signed circuits $\mathcal{C}$ of total length at most
  $4/3\cdot |E(H)|$ that covers the edges of $C_{X'-\{a\}}$ and has
  width at most $2$. We now augment $\CC$ by another signed circuit
  $C_2$ to cover $a$ and cover $\ell$ for a second time if necessary.

  If $\mathcal{C}$ covers $\ell$ once, then we find $C_2$ by
  connecting $C_{\{\ell,a\}}$ into a barbell. If $\mathcal{C}$
  covers $\ell$ twice, then we apply Claim~\ref{cl:4} and define $C_2$
  either as $C_{\{a,b\}}$ (if it is a balanced circuit or a short
  barbell), or as a barbell obtained by joining the components of
  $C_{\{a,b\}}$ (if it is a disjoint union of unbalanced circuits). In
  either case, the length of $C_2$ is at most $|E(G')|$, and adding
  $C_2$ to the collection $\mathcal{C}$, we obtain a signed circuit
  cover of ${X'\cup S'}$ of total length at most
  $(2+1/3)\cdot |E(G')|$ covering $\ell$ twice, a contradiction {{(if we consider the statement of Lemma~\ref{l:main} where $2\cdot |E(G')|$ is replaced by $(2+1/3)\cdot |E(G')|$).}}

  In Section~\ref{sec:B}, we refine the analysis as follows. We show
  how to choose the edges $a$ and $b$, and one of the three signed
  circuit covers of $C_{X'-\{a\}}$ given by Lemma~\ref{l:eventree}, in
  such a way that total length of the cover is at most
  $2\cdot |E(G')|$. The only case when this is not possible is when
  $G'$ is homeomorphic to a certain graph. In the latter case, it is
  easy to find the necessary cover directly.

  \begin{xcase}{Case C}
    The graph $G'$ contains no loops.
  \end{xcase}

  Suppose first that $|X'|$ is even. Connecting $C_{X'}$ into a tree
  of Eulerian graphs $H$ and using Corollary~\ref{cor:eventree}, we
  find a collection of signed circuits of total length at most
  $4/3\cdot |E(H)|$ covering $C_{X'}$, which covers $X'\cup S'$ by
  Claim~\ref{cl:3} and leads to a contradiction.

  Thus, $|X'|$ is odd. We apply an argument similar to the one used in
  Case B. Let $a$ and $b$ be distinct negative edges. Connect
  $C_{X'-\{a\}}$ into a tree of Eulerian graphs $H$ and note that
  $\negatx H$ is even. By Corollary~\ref{cor:eventree}, $H$ admits a
  collection of signed circuits $\mathcal{C}$ of total length at
  most $4/3\cdot |E(H)|$ covering the edges of $C_{X'-\{a\}}$. Using
  Claim~\ref{cl:4}, we find a signed circuit $C_2$ containing
  $C_{\{a,b\}}$ (of length at most $\size{E(G')}$), and we add it to
  $\CC$. The resulting collection $\mathcal{C}\cup\Setx{C_2}$
  covers ${X'\cup S'}$ and its length is at most
  $(2+1/3)\cdot |E(G')|$, which is a contradiction {{(if we consider the statement of Lemma~\ref{l:main} where $2\cdot |E(G')|$ is replaced by $(2+1/3)\cdot |E(G')|$).}}

  In Section~\ref{sec:C}, we refine the analysis as follows. We show
  how to choose the edges $a$ and $b$ in such a way that the length of
  $C_{\{a,b\}}$ is at most $2/3\cdot\size{E(G')}$. We characterise the
  case when this is not possible, and show how to find the cover
  explicitly in this case.
\end{proof}

%%%%%%%%%%%%%%%%%%%%%%%%%%%%%%%%%%%%%%%%%%%%%%%%%%%%%%%%%%%%%%%%%%%%%%
% ================== CLAIMS B C ===================

\section{Finishing Cases B and C}\label{sec:BC}

In this section, we finish the proof of Lemma~\ref{l:main}, retaining
the notation introduced in Section~\ref{sec:proof}. We will first
introduce some further notation and a lemma that will be used to
finish the arguments of Case B and Case C. Recall that $(G',\sigma')$
is a minimal counterexample to Lemma~\ref{l:main} and has been found
to be 2-connected and bridgeless {{with at least two vertices}}, and to contain at most one
loop. Furthermore, $X'$ is the set of negative edges of $G'$, and
$G'-X'$ is a spanning tree $T$ of $G'$.

For $A\subseteq X'$, let $D_A$ be the graph obtained as a union of
$C_a$ over all $a\in A$. Note that $C_x=D_{\{x\}}$ for any $x\in
X'$. Due to the construction of $G'$, $D_A-A$ is a forest and has the
same number of components as $D_A$.

In the following lemma we will show how to extend a 2-connected graph
$D_A$ by adding a suitable edge $x$ to $A$.

\begin{lemma}\label{l:D}
  Let $A\subseteq X'$ be such that $D_A$ is $2$-connected. If $X'-A$ contains no loop, then either
  \begin{itemize}
  \item[(i)] $A=X'$ and $D_A=G'$, or
  \item[(ii)] there exists $x\in X'-A$ such that $D_A\cap C_x$ is a non-trivial path and hence $D_{A\cup\{x\}}$ is $2$-connected, or
  \item[(iii)] $A$ contains only loops.
  \end{itemize}
\end{lemma}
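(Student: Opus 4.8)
The plan is to prove the contrapositive: assuming that $X'-A$ contains no loop and that none of (i), (ii), (iii) holds, I will reach a contradiction with the $2$-connectivity of $G'$. I would start from the elementary observation that for $x\in X'-A$, writing $P_x:=C_x-x$ for the tree path of $x$, the only edges shared by $D_A$ and $C_x$ are those of $(D_A-A)\cap P_x$: the edge $x$ itself is not in $D_A$, and the other edges of $C_x$ (those of $P_x$) lie in $T$ and hence avoid $A$, so they can only meet $D_A$ inside $D_A-A$. Since $D_A$ is $2$-connected, $D_A-A$ is connected (by the remark preceding the lemma it has the same number of components as $D_A$), hence a subtree of $T$, and the set of edges common to this subtree and the path $P_x$ is the edge set of a subpath of $P_x$. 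Thus alternative (ii) is exactly the statement that some $x\in X'-A$ has $D_A$ and $C_x$ sharing at least one edge (in which case $D_A\cap C_x$ is indeed a non-trivial path); and in that situation the complementary arc of the circuit $C_x$ obtained by deleting those common edges is a path, or a single chord, whose two endpoints lie in $V(D_A)$ and whose interior avoids $V(D_A)$, so attaching it to $D_A$ is adding an ear and preserves $2$-connectivity --- which settles the parenthetical ``hence'' in (ii).

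Now I would assume (i), (ii), (iii) all fail. Failure of (iii) yields a non-loop edge $a\in A$, so $D_A-A$ is a subtree of $T$ containing the non-trivial path $P_a$, and in particular $|V(D_A-A)|\ge 2$. Since $G'$ is bridgeless, every tree edge lies on a fundamental circuit, so $D_{X'}=G'$; hence $A=X'$ would force (i), and therefore $A\subsetneq X'$. Consequently $X'-A$ is non-empty, and being loop-free it contains a non-loop edge. Failure of (ii) means that every $x\in X'-A$ has $P_x$ edge-disjoint from $D_A-A$.

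Next I would analyse the forest $T-E(F)$, where $F:=D_A-A$. Each vertex of $T$ has a unique ``gateway'' in $V(F)$, namely the first vertex of $F$ met on its path towards $F$ in $T$, and the components of $T-E(F)$ are precisely the sets $T_v$ of vertices with gateway $v$, one for each $v\in V(F)$, with $T_v$ containing no vertex of $F$ other than $v$. Thus $V(G')$ is partitioned into the ``parts'' $V(T_v)$, $v\in V(F)$, and there are at least two of them because $|V(F)|\ge 2$. Any non-loop $x\in X'-A$ has $P_x$ edge-disjoint from $F$, hence contained in a single component $T_v$, which therefore has at least two vertices; fix such an $x$ and the corresponding non-trivial part $V(T_v)$.

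Finally I would show that $v$ is a cut-vertex of $G'$, contradicting its $2$-connectivity. Both $V(T_v)\setminus\{v\}$ and $V(G')\setminus V(T_v)$ are non-empty, so it suffices to check that no edge of $G'$ joins them. Such an edge $e$ would run between two different parts, so it cannot be a tree edge lying inside a single $T_w$, nor a chord of $X'-A$ (these lie inside single parts, as just shown), nor a loop; it is therefore an edge of $F$ or a non-loop chord of $A$, and in either case both of its endpoints lie in $V(F)$ (for a chord $a'\in A$ this holds because $P_{a'}\subseteq F$). But the only vertex of $F$ contained in $T_v$ is $v$ itself, so $e$ cannot have an endpoint in $V(T_v)\setminus\{v\}$ --- a contradiction. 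Hence $G'-v$ is disconnected, the desired contradiction. I expect the fiddly part to be exactly this last paragraph: the exhaustive classification of the edges of $G'$ into ``inside one part'' versus ``between parts'' and the observation that every between-part edge must have both endpoints among the gateways $V(F)$; the ``subpath'' dichotomy and the ear statement behind (ii) in the first paragraph are the other places that will need a little care.
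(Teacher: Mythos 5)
Your proof is correct, but it is organized differently from the paper's. The paper argues directly: assuming (iii) and (i) fail, it picks an edge $e\in E(G')-E(D_A)$, reduces to the case $e\in T$ with exactly one end $v$ in $D_A$, takes the component $K$ of $T-D_A$ containing $e$, and uses the $2$-connectivity of $G'$ to find an edge $x$ joining $K-\{v\}$ to $V(G')-V(K)$; this $x$ is forced to be a chord of $X'-A$ whose fundamental circuit meets $D_A$ in a non-trivial path, giving (ii). You instead prove the contrapositive: after showing that (ii) is equivalent to some $x\in X'-A$ sharing an edge with $D_A-A$ (and justifying the parenthetical ``hence'' by an ear argument, which the paper merely asserts as easy), you assume every chord of $X'-A$ avoids $E(D_A-A)$, partition $V(G')$ into the components of $T-E(D_A-A)$ with their gateway vertices, and exhibit the gateway of the part containing some non-trivial $P_x$ as a cut-vertex, contradicting $2$-connectivity. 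Both arguments rest on the same structural facts ($D_A-A$ is a subtree of $T$, and each component of $T-E(D_A-A)$ meets it in exactly one vertex); yours is essentially the mirror image of the paper's, buying a clean separation of the three alternatives at the cost of the gateway-partition bookkeeping. You also invoke bridgelessness of $G'$ (legitimately available in the context where the lemma is applied) to settle alternative (i) via $D_{X'}=G'$, whereas the paper disposes of that case by observing that if all edges of $E(G')-E(D_A)$ lie in $X'$ then $T\subseteq D_A$ and (ii) holds. The only step you gloss is that $D_A\cap C_x$ has no isolated vertices outside the common subpath; this is immediate once $A$ contains a non-loop edge, since then $V(D_A)=V(D_A-A)$, so it is not a real gap.
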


\begin{proof}
  Since $D_A$ is connected, $D_A-A$ is a subtree of $T$. If $D_A$
  contains a single vertex, then $A$ contains only loops and (iii)
  holds. Therefore, $D_A$ contains at least two vertices. If
  $D_A = G'$, then (i) holds.
  Assume thus that there is an edge $e$ in
  $E(G')-E(D_A)$. {{If every edge of $E(G')-E(D_A)$ belongs to $X'$, then $T\subseteq D_A$, and thus (ii) holds. Hence we may assume that $e\in T$. Since $G'$ is connected, we may further choose $e$ whose end-vertex belongs to $V(D_A)$.}} We may assume that exactly one end-vertex of $e$ (say
  $v$) belongs to $D_A$, since otherwise $e\in X'$. Consider a
  component $K$ of $T-D_A$ that contains $e$. Since $G'$ is
  2-connected, there is an edge $x$ connecting a vertex from $K-\{v\}$
  to a vertex from $V(G')-V(K)$.  Observe that $x\in X'$, because $T$
  is a tree. It is easy to see that $D_A\cap C_x$ is a non-trivial
  path.
\end{proof}

{{In proofs of both Claim B and Claim C, we will need to define certain
barbells. The unbalanced circuits of the barbells can be defined
easily using elements of $X'$, since $C_x$ is a uniquely defined
elementary circuit of $G'$ for any $x\in X'$. If $C_x$ and $C_y$
are disjoint, then we define the barbell $B_{x,y}^Y$, where $Y\subseteq X'$, as follows.
We contract $C_x$ and $C_y$ into two new vertices $v_x$ and $v_y$.
Positive edges of the graph still form a tree.
Consider the path $P_{x,y}$ consisting of edges of $T$ that connects $v_x$ and $v_y$. If the symmetric difference of
$P_{x,y}$ and all elementary circuits containing edges from $Y$ is a path, denoted by $P^Y_{xy}$, then we set $B_{x,y}^Y=P_{x,y}^Y \cup C_x \cup C_y$.}}

% ================== CLAIM C ===================

\subsection{Case C}\label{sec:C}
Recall that the graph $G'$ has odd number of negative edges (and more
than one negative edge) and no loops. We want to find edges
$a,b \in X$ such that $C_{\{a,b\}}$ can be covered by a signed circuit
of length at most $2/3 \cdot |E(G')|$. If this is not possible, we
want to find a cover of $G'$ that satisfies Lemma~\ref{l:main}.

\begin{lemma}\label{l:3int}
  Suppose that there are $x,y,z \in X'$ such that
  $C_x \cap C_y \neq \emptyset$, $C_x \cap C_z \neq \emptyset$, and
  $C_y \cap C_z \neq \emptyset$. Then there are two edges $a,b \in X'$
  such that $C_{\{a,b\}}$ is either a balanced circuit or a short
  barbell of length at most $2/3 \cdot |E(G')|$.
\end{lemma}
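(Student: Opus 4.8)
We have three negative edges $x,y,z\in X'$ whose fundamental circuits $C_x, C_y, C_z$ pairwise intersect. Since $G'$ has a spanning tree $T = G'-X'$, each $C_w$ ($w\in X'$) consists of the edge $w$ together with the unique $T$-path joining the endpoints of $w$; two such circuits intersect exactly when the corresponding tree-paths share at least one edge (they cannot share only a vertex, since they are fundamental circuits of a tree — the paths $C_x-x$ are paths in $T$). We want to produce $a,b\in X'$ with $C_{\{a,b\}}$ a balanced circuit or a short barbell, and of length at most $\tfrac23|E(G')|$.

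**The plan.** The plan is to exploit that the three tree-paths $P_x = C_x-x$, $P_y = C_y-y$, $P_z = C_z-z$ pairwise intersect in $T$. In a tree, a family of pairwise-intersecting subpaths has the Helly property: there is a vertex (in fact an edge, if all pairwise intersections contain an edge) common to all three. So I would first argue that $P_x\cap P_y\cap P_z$ is nonempty, hence $C_{\{x,y\}}$, $C_{\{x,z\}}$, $C_{\{y,z\}}$ are each connected — by Claim~\ref{cl:4} each is either a balanced circuit or a short barbell (the disjoint-union case is excluded because the paths meet). Next I would control the lengths. Write $P = P_x\cap P_y\cap P_z$ and note $|E(C_{\{a,b\}})| = |E(P_a\cup P_b)| + 2 = |E(P_a)| + |E(P_b)| - |E(P_a\cap P_b)| + 2$. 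The three circuits $C_x, C_y, C_z$ together cover at most $|E(G')|$ edges, but the common part $P$ is counted at least... here is the averaging: summing $|E(C_{\{x,y\}})| + |E(C_{\{x,z\}})| + |E(C_{\{y,z\}})|$, each of $x,y,z$ and each edge of $P_x, P_y, P_z$ is counted, with edges lying in two of the paths counted twice and edges in all three (i.e.\ in $P$) counted three times — but in the three symmetric differences $C_{\{a,b\}}$, an edge of $P$ is actually counted $0$ times (it cancels in each symmetric difference). So the relevant sum telescopes to at most $2|E(G')|$ minus a correction, giving one of the three circuits length at most $\tfrac23$ of that bound.

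**Making the count precise.** More carefully: for distinct $a,b\in\{x,y,z\}$, $C_{\{a,b\}} = C_a \mathbin{\triangle} C_b$ consists of $a$, $b$, and the edges of $P_a\mathbin{\triangle}P_b = (P_a\cup P_b)\setminus(P_a\cap P_b)$. I would set $S = E(C_x)\cup E(C_y)\cup E(C_z)$, so $|S|\le |E(G')|$, and classify each edge $e\in S$ by how many of $C_x,C_y,C_z$ contain it. An edge in exactly one $C_w$ contributes $2$ to $\sum_{\{a,b\}}|E(C_{\{a,b\}})|$; an edge in exactly two contributes $2$ (it is symmetric-difference-cancelled in one pair, present in two — wait, present in both pairs that involve... let me just say: in exactly two $C_w$, present in $0$ of the three symmetric differences if those two are the pair, so it contributes to the two symmetric differences each involving one of the two — that's $2$); an edge in all three contributes $0$. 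So $\sum_{\{a,b\}}|E(C_{\{a,b\}})| \le 2|S| \le 2|E(G')|$, whence $\min_{\{a,b\}}|E(C_{\{a,b\}})|\le \tfrac23|E(G')|$. Choosing $a,b$ to be the minimizing pair, $C_{\{a,b\}}$ is connected (by Helly, since it contains the edge or vertex of the common intersection, so it is not a disjoint union of two circuits), hence by Claim~\ref{cl:4} it is a balanced circuit or a short barbell, and it has length at most $\tfrac23|E(G')|$, as required.

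**Main obstacle.** The technical heart is the edge-counting: verifying that an edge lying in exactly two of the $C_w$ contributes exactly $2$ (not $4$) to the sum of the three symmetric-difference sizes, and that an edge in all three contributes $0$, so that the bound $2|S|$ is correct rather than something weaker. A subtle point to get right is that the loop edges $x,y,z$ themselves each lie in exactly one $C_w$ and contribute $2$, which is already absorbed in $2|S|$ since $x,y,z\in S$. I also need the Helly property for subpaths of a tree — that pairwise-intersecting subtrees (here subpaths) of a tree have a common vertex — which is standard; combined with the fact that in $T$ two fundamental-circuit paths that intersect must share an edge (they are paths, and two paths in a tree sharing a vertex but no edge would force that vertex to have the two paths branching, which is fine — so actually I should be careful: they might share only a vertex). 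If $P_a\cap P_b$ is a single vertex, $C_{\{a,b\}}$ is still a short barbell (two circuits meeting at a vertex), which is fine; the only bad case for Claim~\ref{cl:4} is when $P_a\cap P_b=\emptyset$, and the Helly argument rules that out for at least one pair, in fact the hypothesis rules it out for all three pairs. So connectivity of all three $C_{\{a,b\}}$ is immediate from the hypothesis, and only the length bound needs the averaging.
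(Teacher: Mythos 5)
Your proof is correct and takes essentially the same approach as the paper: the key step --- classifying each edge by how many of $C_x, C_y, C_z$ contain it and concluding that no edge lies in all three of $C_{\{x,y\}}, C_{\{x,z\}}, C_{\{y,z\}}$, hence $\sum |E(C_{\{a,b\}})| \le 2|E(G')|$ --- is exactly the paper's argument, just spelled out with slightly more bookkeeping. The digression about the Helly property for tree paths is unnecessary (as you note at the end yourself, also correcting your earlier false claim that the fundamental-circuit paths cannot share only a vertex): the hypothesis already gives pairwise nonempty intersection, so Claim~\ref{cl:4} directly rules out the disjoint-union case for all three pairs.
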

\begin{proof}
  By Claim~\ref{cl:4}, the subgraphs $C_{\{x,y\}}$, $C_{\{x,z\}}$, and
  $C_{\{y,z\}}$ are balanced circuits or short barbells. Each edge $e$
  of $D_{\{x,y,z\}}$ belongs to one, two or three circuits among
  $C_x$, $C_y$, and $C_z$. If $e$ belongs to exactly one of the
  circuits, say $C_x$, then $e \not \in C_{\{y,z\}}$. If $e$ belongs
  to two of the circuits, say $C_x$ and $C_y$ (and possibly $C_z$),
  then $e \not \in C_{\{x,y\}}$. Thus
  $C_{\{x,y\}} \cap C_{\{x,z\}} \cap C_{\{y,z\}}$ has no edge, and
  $|E(C_{\{x,y\}})|+ |E(C_{\{x,z\}})| + |E(C_{\{y,z\}})| \leq 2\cdot
  |E(G')|$. The lemma follows.
\end{proof}

Let $x\in X'$. Since $|X'|\geq 3$, by Lemma~\ref{l:D} there is
$y\in X$ such that $C_x\cap C_y$ is a non-trivial path. Again by
Lemma~\ref{l:D}, there exists $z$ such that $C_z\cap D_{\{x,y\}}$ is a
non-trivial path. Due to Lemma~\ref{l:3int} we may assume, without
loss of generality, that $C_x \cap C_z = \emptyset$. An example of the
graph $D_{\{x, y, z\}}$ is depicted in Fig.~\ref{fig}.

\begin{figure}[htp]
  \center
  \includegraphics{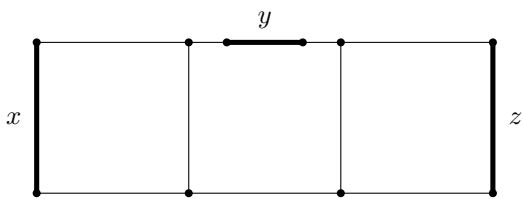}
  \caption{Possible configuration of graph $D_{\{x, y, z\}}$. As in
    the subsequent figures, the edges of $T$ are represented by thin
    lines and the edges of $X'$ are shown by thick lines.}
  \label{fig}
\end{figure}

If $G'= D_{\{x, y, z\}}$, then we can cover the edges of $G'$ by two
barbells $B_{x,z}^{\emptyset}$ and $B_{x,z}^{\{y\}}$. This cover has
the property stated in Lemma~\ref{l:main}. Therefore, we may assume
that $G'\neq D_{\{x, y, z\}}$. By Lemma~\ref{l:D} there is an edge
$w\in X'-\{x,y,z\}$ such that $C_w \cap D_{\{x, y, z\}}$ is a
non-trivial path. If some three of the circuits $C_x$, $C_y$, $C_z$,
$C_w$ have pairwise non-empty intersection, then we can apply
Lemma~\ref{l:3int}.

{{First, let us assume that one of
the circuits}} $C_x$, $C_y$, $C_z$, $C_w$, say $C_y$, intersects all the
other circuits. {{Due to Lemma~\ref{l:3int}, $C_w\cap C_x =\emptyset$ and $C_w\cap C_z =\emptyset$. Therefore, either $C_w\cap C_y \subseteq B_{xz}^{\emptyset}$ or $C_w\cap C_y \subseteq B_{xz}^{\{y\}}$. We deal with the first case, which is depicted in Fig.~\ref{figa}. The latter case is similar.}}

\begin{figure}[htp]
  \center
  \includegraphics{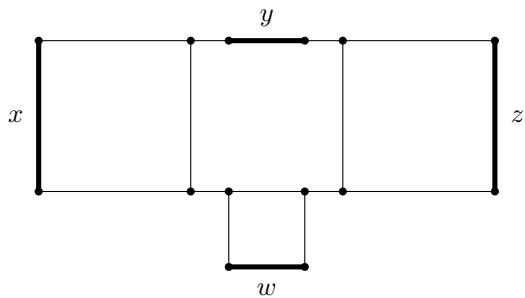}
  \caption{Possible configuration of $D_{\{x, y, z, w\}}$ in case $C_y$ intersects all other circuits.}
  \label{figa}
\end{figure}

The barbells $B_{x,z}^{\{y\}}$, $B_{x,w}^{\emptyset}$,
$B_{z,w}^{\emptyset}$ do not cover any edge of $G'$ more than twice in
total, therefore one of them has length at most $2/3 \cdot |E(G')|$.

{{It remains to consider}} that none of the circuits $C_x$, $C_y$, $C_z$,
$C_w$ intersects all the other circuits.  Without loss of generality
suppose that $C_x \cap C_y \neq \emptyset$,
$C_y \cap C_z \neq \emptyset$, and $C_z \cap C_w \neq \emptyset$. An
example of the graph $D_{\{x, y, z, w\}}$ is depicted in
Fig.~\ref{fig2}.
\begin{figure}[htp]
  \center
  \includegraphics{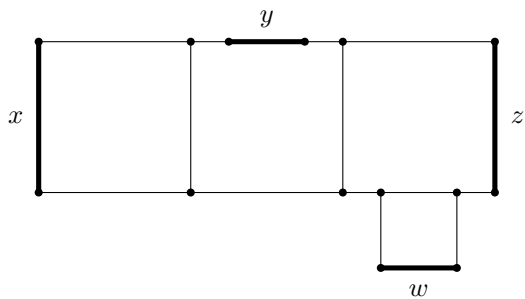}
  \caption{Possible configuration of $D_{\{x, y, z, w\}}$ in case that none of $C_x, C_y, C_z$ and $C_w$ intersects all
other circuits.}
  \label{figb}
\end{figure}

Note that $C_{\{x,y\}}$, $C_{\{y,z\}}$, and $C_{\{z,w\}}$ are balanced circuits and no edge is contained in all three of them. Therefore, one of them has length at most $2/3 \cdot |E(G')|$, which gives the desired choice of $a$ and $b$.

% ============== CLAIM B ===================

\subsection{Case B}\label{sec:B}

Recall that in Case B, the graph $G'$ is 2-connected,
$\negat{G'}\geq 3$ is odd, and $G'$ contains one (unbalanced) loop
$\ell$.  A \emph{solution} $\mathcal{S}$ in $G'$ is a quadruple
$(a,b, C_1, C_2)$ such that $a, b \in X'$, $C_1$ is a signed circuit
that contains $C_{\{\ell,a\} }$, and $C_2$ is a signed circuit that
contains $C_{\{a,b\}}$. The \emph{cover corresponding to the solution
  $(a,b, C_1, C_2)$} is the collection $\Setx{C_1,C_1,C_2}$.

\begin{lemma}\label{l:sol}
  There are no $k$ solutions in $G'$ such that the union of covers
  corresponding to these $k$ solutions has width at most $2k$.
\end{lemma}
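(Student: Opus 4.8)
The plan is to argue by contradiction, feeding into the minimality of $(G',\sigma')$ as a counterexample to Lemma~\ref{l:main}. Suppose $\mathcal S_1,\dots,\mathcal S_k$ are solutions in $G'$, with $\mathcal S_j=(a_j,b_j,C_1^{\,j},C_2^{\,j})$, whose corresponding covers $\mathcal D_j=\{C_1^{\,j},C_1^{\,j},C_2^{\,j}\}$ satisfy $\omega(\mathcal D_1,\dots,\mathcal D_k)\le 2k$. Since every member of every $\mathcal D_j$ is a subgraph of $G'$, the total length of $\mathcal D_1,\dots,\mathcal D_k$ equals $\sum_{e\in E(G')}\omega_{\mathcal D_1,\dots,\mathcal D_k}(e)\le 2k\cdot\size{E(G')}$; as it also equals $\sum_{j=1}^{k}\bigl(2\size{E(C_1^{\,j})}+\size{E(C_2^{\,j})}\bigr)$, there is by averaging an index $j$ with $2\size{E(C_1^{\,j})}+\size{E(C_2^{\,j})}\le 2\size{E(G')}$. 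Fix such a $j$ and abbreviate $a=a_j$, $b=b_j$, $C_1=C_1^{\,j}$, $C_2=C_2^{\,j}$ (we may assume $a$, $b$ and the loop $\ell$ are pairwise distinct; the remaining configurations reduce to easier sub-cases). It now suffices to manufacture from this single solution a collection of signed circuits of total length at most $2\size{E(G')}$ covering $X'\cup S'$ and covering $\ell$ exactly twice, since this contradicts minimality.

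To this end I would combine the solution with a triple of base covers of $C_{X'-\{a\}}$. Because $\size{X'}$ is odd, $\size{X'-\{a\}}$ is even, so $C_{X'-\{a\}}$ has an even number of negative edges; connecting it into a tree of Eulerian graphs $H$ (using edges of $G'$) gives $E(H)\subseteq E(G')$ and $\negatx H=\size{X'}-1$ even. By Lemma~\ref{l:eventree}, $H$ has weak signed circuit covers $\CC_1,\CC_2,\CC_3$ of total width at most $4$, each of width at most $2$, with $\CC_1$ covering $\ell$ exactly twice; since $\ell$ is a non-bridge edge of $H$, the total-width bound then forces each of $\CC_2,\CC_3$ to cover $\ell$ exactly once. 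Define
\[
  \mathcal F_1=\CC_1\cup\{C_2\},\qquad \mathcal F_2=\CC_2\cup\{C_1\},\qquad \mathcal F_3=\CC_3\cup\{C_1\}.
\]

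Next I would verify that each $\mathcal F_i$ has the properties required by Lemma~\ref{l:main}. Every $\CC_i$ covers all non-bridge edges of $H$, in particular the edges of $X'-\{a\}$ and, by Claim~\ref{cl:3}, those of $S'_{X'-\{a\}}$; and both $C_1\supseteq C_{\{\ell,a\}}$ and $C_2\supseteq C_{\{a,b\}}$ cover $a$ together with $S'_a$. Since $S'=\bigcup_{x\in X'}S'_x$, each $\mathcal F_i$ covers $X'\cup S'$. Moreover $\ell\in C_1$ while $\ell\notin C_2$, so in each of $\mathcal F_1,\mathcal F_2,\mathcal F_3$ the unique loop $\ell$ is covered exactly twice. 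As for length: the total width of $\CC_1,\CC_2,\CC_3$ being at most $4$, the sum of their lengths is at most $4\size{E(H)}\le 4\size{E(G')}$, whence the total length of $\mathcal F_1,\mathcal F_2,\mathcal F_3$ is at most $4\size{E(G')}+2\size{E(C_1)}+\size{E(C_2)}\le 6\size{E(G')}$, using the inequality $2\size{E(C_1)}+\size{E(C_2)}\le 2\size{E(G')}$ secured above. Hence one of $\mathcal F_1,\mathcal F_2,\mathcal F_3$ is a signed circuit cover of $G'$ of length at most $2\size{E(G')}$ with the desired properties, the sought contradiction.

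I expect the crux to be the coverage bookkeeping of the previous paragraph, and in particular the point that whichever base cover $\CC_i$ one uses, adjoining a single one of $C_1$ and $C_2$ simultaneously restores the coverage of $a$ and keeps $\ell$ covered exactly twice. This is precisely why the cover attached to a solution has the shape ``$C_1$ twice, $C_2$ once'', and it is what makes the averaging land on the bound $2\size{E(G')}$ rather than a weaker one. The remaining ingredients --- the parity of $\size{X'-\{a\}}$, the availability of $H$ with $E(H)\subseteq E(G')$, and the fact (via Claim~\ref{cl:4}) that $C_{\{\ell,a\}}$ and $C_{\{a,b\}}$ are barbells or circuits not meeting $\ell$ --- are routine.
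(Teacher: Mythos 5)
Your proof is correct and follows essentially the same route as the paper: combine each solution's cover $\{C_1,C_1,C_2\}$ with the three weak covers of $C_{X'-\{a\}}$ supplied by Lemma~\ref{l:eventree}, and average to extract a collection of length at most $2\size{E(G')}$ covering $X'\cup B'\cup S'$ with $\ell$ covered exactly twice, contradicting the minimality of $G'$. The only cosmetic difference is that you perform the averaging in two stages (first over the $k$ solutions to pin down a single good solution, then over its three associated collections), whereas the paper bounds the total width of all $3k$ collections at once by $6k$ and averages once; the two calculations are equivalent.
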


\begin{proof}
  For the sake of a contradiction, assume that the $k$ solutions
  exist. Let $\mathcal{S}=(a,b,C_1,C_2)$ be one of them. Consider
  $C_{X'-\{a\}}$, which has even number of negative edges, and connect
  it into a tree of Eulerian graphs $H$. By Lemma~\ref{l:eventree},
  there are three weak signed circuit covers ${\cal C}_1$,
  ${\cal C}_2$, ${\cal C}_3$ of $H$ that cover edges of $C_{X'-\{a\}}$
  with total width at most $4$. Moreover, ${\cal C}_1$ covers the loop
  $\ell$ exactly twice, and each of ${\cal C}_2$ and ${\cal C}_3$
  covers $\ell$ exactly once. We define three collections of signed
  circuits as follows: ${\cal C}_1 \cup \Setx{C_2}$,
  ${\cal C}_2 \cup \Setx{C_1}$ and ${\cal C}_3 \cup \Setx{C_1}$.

  We follow this procedure for all $k$ solutions to obtain $3k$
  collections that cover edges of $X'\cup B'\cup S'$. The total width
  of these $3k$ collections is obtained as the sum of total width of
  covers corresponding to the $k$ solutions, which is at most $2k$ by
  the assumption, and the total width of $3k$ covers obtained from $k$
  uses of Lemma~\ref{l:eventree}, which is at most $4k$; hence the
  total width of the $3k$ collections thus defined is at most $6k$. We
  conclude that one of the collections must have length at most
  $2 \cdot |E(G')|$. Since each of the collections covers the loop
  $\ell$ exactly twice, we obtained a contradiction with the
  assumption that $G'$ is a counterexample to Lemma~\ref{l:main}.
\end{proof}

\begin{lemma}\label{l:l}
  There are no two edges $x,y \in X'$ such that
  $C_x \cap C_y \cap C_\ell \neq \emptyset$.
\end{lemma}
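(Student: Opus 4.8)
The statement to prove is Lemma~\ref{l:l}: there are no two edges $x,y\in X'$ with $C_x\cap C_y\cap C_\ell\neq\emptyset$. My plan is to argue by contradiction: suppose such $x,y$ exist, and use them together with the loop $\ell$ to construct a bounded-width family of solutions, which contradicts Lemma~\ref{l:sol}. The rough idea is that if $C_x$, $C_y$, and $C_\ell$ all share an edge (equivalently, share a nontrivial tree-path, since all the $C$'s are elementary circuits of $G'=T\cup X'$), then the three pairwise symmetric differences $C_{\{\ell,x\}}$, $C_{\{\ell,y\}}$, $C_{\{x,y\}}$ are mutually ``thin'': by the same counting argument as in Lemma~\ref{l:3int}, no edge of $D_{\{\ell,x,y\}}$ lies in all three of them, so their total length is at most $2\cdot|E(G')|$. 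Each of these three symmetric differences, by Claim~\ref{cl:4}, is a balanced circuit or a short barbell, hence already a signed circuit.

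First I would fix $x,y$ with $C_x\cap C_y\cap C_\ell$ nonempty (note $\ell$ being a loop means $C_\ell=\ell$, so actually $C_x\cap C_y\cap C_\ell\neq\emptyset$ forces $\ell\in E(C_x)\cap E(C_y)$ — but $\ell\notin T$ and $C_x$ is the fundamental circuit of $T\cup x$, so $C_x$ contains no negative edge other than $x$; thus $\ell\in C_x$ forces $\ell=x$, similarly $\ell=y$, contradiction unless I am misreading $C_\ell$). So the real content is that $C_\ell$ should be interpreted as the loop $\ell$ sitting at a vertex, and $C_x\cap C_y\cap C_\ell\neq\emptyset$ must mean the three share the \emph{vertex} incident to $\ell$ — i.e. the vertex of $\ell$ lies on both $C_x$ and $C_y$ and $C_x\cap C_y$ is a nontrivial path through it. Granting that reading, the construction goes: form the three signed circuits $C_1^{(1)}=C_{\{\ell,x\}}$ (a short barbell, since $C_\ell=\ell$ is an unbalanced loop at a vertex of the unbalanced circuit $C_x$), $C_1^{(2)}=C_{\{\ell,y\}}$, and the third circuit $C_{\{x,y\}}$. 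These give two solutions $\mathcal S_1=(x,y,\,C_{\{\ell,x\}},\,C_{\{x,y\}})$ and $\mathcal S_2=(y,x,\,C_{\{\ell,y\}},\,C_{\{x,y\}})$ in the sense of Section~\ref{sec:B}.

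The key computation is the width bound: the cover corresponding to $\mathcal S_1$ is $\{C_{\{\ell,x\}},C_{\{\ell,x\}},C_{\{x,y\}}\}$ and that for $\mathcal S_2$ is $\{C_{\{\ell,y\}},C_{\{\ell,y\}},C_{\{x,y\}}\}$, so the union has, on each edge $e$, total width $2\omega_{C_{\{\ell,x\}}}(e)+2\omega_{C_{\{\ell,y\}}}(e)+2\omega_{C_{\{x,y\}}}(e)$. I must show this is at most $2k=4$ everywhere, i.e. each edge lies in at most two of the three circuits $C_{\{\ell,x\}},C_{\{\ell,y\}},C_{\{x,y\}}$ (counted with multiplicity one each — they are circuits, so width $\le1$ in each). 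This is exactly the Lemma~\ref{l:3int}-style argument applied to the triple $\{\ell,x,y\}$: an edge $e\in D_{\{\ell,x,y\}}$ belongs to one, two, or three of $C_\ell,C_x,C_y$; if to exactly one, say $C_x$, then $e\notin C_{\{\ell,y\}}$; if to at least two, say $C_x,C_y$, then $e\notin C_{\{x,y\}}$; so $e$ misses at least one of the three symmetric differences. Hence total width $\le4$, contradicting Lemma~\ref{l:sol} with $k=2$.

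**Main obstacle.** The delicate point is handling the loop $\ell$ correctly in the symmetric-difference bookkeeping — making sure $C_{\{\ell,x\}}$ is genuinely a short barbell (it is: $C_\ell=\ell$ is an unbalanced loop and $C_x$ is an unbalanced circuit meeting it at a single vertex, provided that vertex lies on $C_x$, which is the hypothesis), and checking that the counting argument of Lemma~\ref{l:3int} still applies when one of the three ``circuits'' is a loop. A loop contributes exactly one edge ($\ell$ itself), which lies only in $C_\ell$ among the three, hence in $C_{\{\ell,x\}}$ and $C_{\{\ell,y\}}$ but not $C_{\{x,y\}}$, so it is covered with total width $4$ — right at the boundary, which is fine. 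I would also need to confirm that $C_{\{x,y\}}$ is a legitimate signed circuit even when it is a disjoint union of two unbalanced circuits; if Claim~\ref{cl:4} yields that degenerate case, I join the two components by a $T$-path as in the $B^Y_{x,y}$ construction, but then I must re-examine the width — which may push beyond $2k$, so the cleanest route is to note that $\ell\in C_x\cap C_y$ forces $C_x$ and $C_y$ to \emph{share a vertex}, hence $C_{\{x,y\}}$ is a balanced circuit or short barbell and no extra path is needed. That observation is what makes the width bound tight enough, and verifying it carefully is the crux of the argument.
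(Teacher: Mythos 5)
Your proposal is correct and follows essentially the same route as the paper: the two solutions $(x,y,C_{\{\ell,x\}},C_{\{x,y\}})$ and $(y,x,C_{\{\ell,y\}},C_{\{x,y\}})$ are exactly those in the paper's proof, which likewise derives a contradiction from Lemma~\ref{l:sol} with $k=2$. Your reading of $C_\ell$ as the loop itself (so the hypothesis means the vertex of $\ell$ lies on the path $C_x\cap C_y$) is the intended one, and your parity/counting check that no edge lies in all three symmetric differences is just the width verification the paper leaves implicit.
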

\begin{proof}
  Suppose that edges $x,y \in X'$ have the stated property. We
  construct two solutions of width $4$:
  \begin{align*}
    \mathcal{S}_1&=(x,y,C_{\{\ell,x\}}, C_{\{x,y\}}),\\
    \mathcal{S}_2&=(y,x,C_{\{\ell,y\}}, C_{\{y,x\}}).
  \end{align*}
  These solutions provide a contradiction with Lemma~\ref{l:sol}.
\end{proof}

Choose $x$ such that the graph $D_{\{\ell,x\}}$ is connected and note
that $D_{\{\ell,x\}}$ is 2-connected. Since $|X'| \ge 3$, by
Lemma~\ref{l:D}, there exists an edge $y \in X'$ such that
$D_{\{\ell,x,y\}}$ is 2-connected. By Lemma~\ref{l:l},
$C_\ell\cap C_x \cap C_y=\emptyset$. An example showing
$D_{\{\ell,x,y\}}$ is given in Fig.~\ref{fig2}.

\begin{figure}[htp]
  \center
  \includegraphics{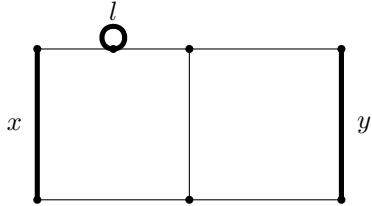}
  \caption{Possible configuration of $D_{\{\ell,x,y\}}$.}
  \label{fig2}
\end{figure}

If $G'=D_{\{\ell,x,y\}}$, then a collection of signed circuits
required by Lemma~\ref{l:main} that provides a contradiction is
$\Setx{C_{\{\ell,x\}}, B_{\ell,y}^\emptyset}$. Otherwise, by
Lemma~\ref{l:D}, there exists an edge $z\in X'$ such that
$C_z \cap D_{\{\ell,x,y\}}$ is a non-trivial path $P$.  We will
discuss the possible positions of $P$ with respect to the segments of
$D_{\{\ell,x,y\}}$.

By Lemma~\ref{l:l}, $C_z\cap C_x \cap C_\ell = \emptyset$. Suppose
first that $P$ intersects $C_x-C_y$. Then there exist two
edge-disjoint paths $P_1$ and $P_2$ in $D_{\{\ell,x,y,z\}}-C_y-C_z$,
$P_1$ connecting $\ell$ to $C_y$ and $P_2$ connecting $\ell$ to $C_z$.
Then the following collections of solutions satisfy
Lemma~\ref{l:sol}, which is a contradiction. Define $C^*$ to be either a barbell containing
$C_{\{y,z\}}$ and not containing $V(\ell)$ (for
$E(C_y)\cap E(C_z)=\emptyset$) or a balanced circuit $C_{\{y,z\}}$
(otherwise).  We set $S_1=(y,z, C_{\{\ell,y\}} \cup P_1,C^*)$ and
$S_2=(z,y, C_{\{\ell,z\}} \cup P_2,C^*)$. Note that when $C_{\{y,z\}}$
is a circuit, the edges of $C_y \cap C_z$ are covered by both barbells
but not by $C^*$.

Suppose finally that $P$ does not intersect $C_x-C_y$. Then
$P\subseteq C_y$ and there are three possible subcases:
\begin{enumerate}[(i)]
\item $P\subseteq C_y - C_x$ and $B_{l,z}^{\emptyset}$ does not contain any
edge of $C_x\cap C_y$,
\item $P\subseteq C_y - C_x$ and
$B_{l,z}^{\emptyset}$ does contain an edge of $C_x\cap C_y$,
\item $P\subseteq C_x\cap C_y$.
\end{enumerate}
Examples for these subcases are depicted in Fig.~\ref{fig3}.

\begin{figure}[htp]
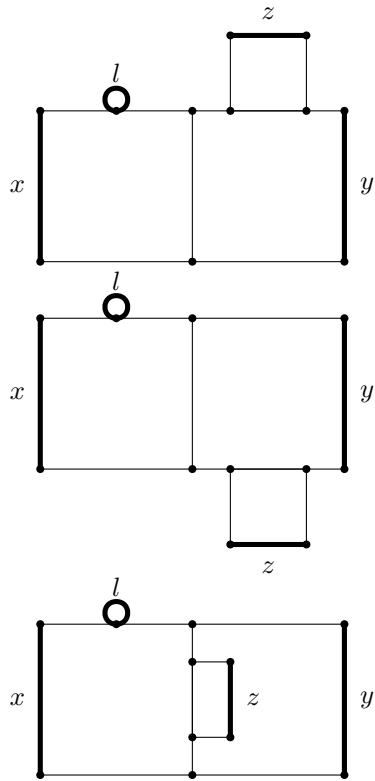

  \center
  \includegraphics{sb-4}\\
  \includegraphics{sb-3}\\
  \includegraphics{sb-5}
  \caption{Possible configurations of $D_{\{\ell,x,y,z\}}$.}
  \label{fig3}
\end{figure}

For each of the subcases, there are three solutions such that the
corresponding covers have total width at most $6$. These solutions are
given in Table~\ref{tab:solutions}. This contradiction to
Lemma~\ref{l:sol} finishes the discussion of Case B and hence also the
proof of Lemma~\ref{l:main}.\qed

\begin{table}
  \centering
  \begin{tabular}{r|l}
    case & solutions\\\hline
    (i) & $(x,y,C_{\{\ell,x\}},C_{\{x,y\}}),
          (y,z, B_{\ell,y}^{\emptyset},C_{\{y,z\}}),
          (z,x,B_{\ell,z}^{\{x,y\}},B_{x,z}^{\emptyset})$\\
    (ii) & $(x,y,C_{\{\ell,x\}},C_{\{x,y\}}),
           (y,z, B_{\ell,y}^{\emptyset},C_{\{y,z\}}),
           (z,x,B_{\ell,z}^{\{x\}},B_{x,z}^{\{y\}})$\\
    (iii) & $(x,y,C_{\{\ell,x\}},C_{\{x,y\}}),
            (z,x, B_{\ell,z}^{\emptyset},C_{\{z,x\}}),
            (z,y,B_{\ell,z}^{\{x\}},C_{\{z,y\}})$
  \end{tabular}
  \caption{Solutions for subcases (i)--(iii) when $P$ does not
    intersect $C_x-C_y$.}
  \label{tab:solutions}
\end{table}

% ============== SECTION 5 ===================

\section{Alternative bounds}\label{sec5}

In this section we provide several alternative bounds on the length of a shortest signed circuit cover of a flow-admissible signed graph.
We compare these bounds to results of Cheng et al. \cite{CLLZ}. We will follow notation from Section~\ref{sec:proof}.

The graph $G-X-B-S$ has at most $|E(G)|-|X|$ edges. The graph $G'$ has at most
$|V(G)|-1+|X|$ edges.
By Theorem~\ref{BJJ}, we can cover $G-X-B-S$ with a circuit cover of length at most
$5/3 \cdot (|E(G)|-|X|)$ and by Lemma~\ref{l:main} we can cover the edges
$X \cup B \cup S$ in $G'$ with signed circuits of total length at most
$2\cdot|V(G)|+2|X|-2$. Altogether, the constructed signed circuit cover has length at most
$5/3 \cdot |E(G)|+2|V(G)|+1/3\cdot |X| -2$, while the bound obtained by Cheng et al. \cite{CLLZ} is
$5/3 \cdot |E(G)|+3|V(G)|+4/3\cdot |X| -7$.
Due to minimality of $X$ and connectivity of $G$ we have $|E|\ge |X|+|V|-1$.
Using this inequality we obtain the bound $11/3 \cdot |E(G)|-5/3 \cdot |X|$ from Theorem~\ref{thm:main}.

For dense bridgeless graphs a result of Fan \cite{F} provides a better bound on shortest circuit covers of bridgeless graphs than
Theorem~\ref{BJJ}.  Fan showed that a bridgeless graph on $n$ vertices and $m$ edges can be covered with
circuits of total length at most $n+m-1$. This implies that $G-X-B-S$
can be covered by circuits of total length at most $|E(G)|-|X|+|V(G)|-1$.
Together with the cover of $X \cup B \cup S$ in $G'$ we obtain
a signed circuit cover of $G$ of length at most
$|E(G)|+3\cdot |V(G)|+|X| -3$, while the bound obtained by Cheng et al. \cite{CLLZ} is $|E(G)|+4\cdot |V(G)|+2\cdot |X| -8$.

\section*{Acknowledgements}
The first and the fourth author acknowledge support from projects GA17-04611S and GA14-19503S of
the Czech Science Foundation.

\noindent The work of the second and the third author was partially supported by the VEGA grant No. 1/0876/16 and by the APVV grant No. APVV-15-0220.

\noindent The work of the fourth author was partially supported by the project
LO1506 of the Czech Ministry of Education, Youth and Sports.

%%%%%%%%%%%%%%%%%%%%%%%%%%%%%%%%%%%%%%%%%%%%%%%%%%%%%%%%%%%%%%%%%%%%%%

\end{document}